\numberwithin{equation}{section}
\numberwithin{figure}{section}
\theoremstyle{plain}
\newtheorem{thm}{Theorem}
  \theoremstyle{plain}
  \newtheorem{prop}[thm]{Proposition}
  \theoremstyle{plain}
  \newtheorem{cor}[thm]{Corollary}
  \theoremstyle{plain}
  \newtheorem*{thm*}{Theorem}
  \theoremstyle{remark}
  \newtheorem{rem}[thm]{Remark}
\begin{document}

\title{Minimality, (Weighted) Interpolation in Paley-Wiener Spaces \& Control
Theory}

\author{Fr\'ed\'eric Gaunard}
\begin{abstract}
It is well known from a result by Shapiro-Shields that in the Hardy
spaces, a sequence of reproducing kernels is uniformly minimal if
and only if it is an unconditional basis in its span. This property
which can be reformulated in terms of interpolation and so-called
weak interpolation is not true in Paley-Wiener spaces in general.
Here we show that the Carleson condition on a sequence $\Lambda$
together with minimality in Paley-Wiener spaces $PW_{\tau}^{p}$ of
the associated sequence of reproducing kernels implies the interpolation
property of $\Lambda$ in $PW_{\tau+\epsilon}^{p}$, for every $\epsilon>0$.
With the same technics, using a result of McPhail, we prove a similary
result about minimlity and weighted interpolation in $PW_{\tau+\epsilon}^{p}$..
We apply the results to control theory, establishing that, under some
hypotheses, a certain weak type of controllability in time $\tau>0$
implies exact controllability in time $\tau+\epsilon$, for every
$\epsilon>0$.
\end{abstract}

\subjclass[2000]{30E05, 42A15, 93B05.}

\keywords{Interpolation, Paley-Wiener spaces, minimal sequences, controllability.}

\date{\today}

\maketitle

\section{Introduction}

Let $X$ be a Banach space. A sequence $\left\{ \phi_{n}\right\} _{n\geq1}$
of vectors of $X$ is said to be \emph{minimal }in $X$ if $\phi_{n}\not\in\bigvee_{k\neq n}\phi_{k}:=\overline{\text{span}}^{X}\left(\phi_{k}:\: k\not=n\right)$,
$n\geq1$, and \emph{uniformly minimal} if moreover \begin{equation}
\inf_{n\geq1}\text{dist}\left(\frac{\phi_{n}}{\left\Vert \phi_{n}\right\Vert },\:\bigvee_{k\neq n}\phi_{k}\right)>0.\label{def: unif min.}\end{equation}
It is well known (see e.g. \cite[p. 93]{Ni02a}) that minimality of
$\left\{ \phi_{n}\right\} _{n\geq1}$ in $X$ is equivalent to the
existence of a sequence $\left\{ \psi_{n}\right\} _{n\geq1}\subset X^{\star}$
such that $\left\langle \phi_{n},\psi_{k}\right\rangle =\delta_{nk}$
and the minimality is said uniform if and only if \begin{equation}
\sup_{n\geq1}\:\left\Vert \phi_{n}\right\Vert \cdot\left\Vert \psi_{n}\right\Vert <\infty.\label{eq: unif min equiv}\end{equation}

We consider the case where $X$ is a Banach space of analytic functions
on a domain $\Omega$. Let $\Lambda=\left\{ \lambda_{n}\right\} _{n\geq1}$
be a sequence of complex numbers lying in $\Omega$. We use the terminology
\emph{minimal }also for the sequence $\Lambda$ if there exists a
sequence of functions $(f_{n})_{n\geq1}$ of $X$ such that\[
f_{n}(\lambda_{k})=\delta_{nk},\quad n,k\geq1,\]
and we say that $\Lambda$ is a \emph{weak interpolating sequence}
in $X$, which is denoted by $\Lambda\in\text{Int}_{w}\left(X\right)$,
if there exists a sequence of functions $\left(f_{n}\right)_{n\geq1}$
of $X$ such that\begin{equation}
f_{n}\left(\lambda_{k}\right)=\delta_{nk}\left\Vert k_{\lambda_{n}}\right\Vert _{X^{\star}},\; n\geq1,\text{ and }\sup_{n\geq1}\left\Vert f_{n}\right\Vert <\infty.\label{Int w}\end{equation}
When $X$ is reflexive, this is equivalent to the fact that $\mathcal{K}(\Lambda)$
is uniformly minimal in $X^{\star}$. Such a sequence $\Lambda$ could
also be called a uniformly minimal sequence in $X$ but we prefer
to keep the existing terminology of weak interpolating sequence.

In the case where $X=H^{p}\left(\mathbb{C}_{a}^{\pm}\right)$, $1<p<\infty$,
the Hardy space of the half-plane $\mathbb{C}_{a}^{\pm}$, we can
identify \[
\left(H^{p}\left(\mathbb{C}_{a}^{\pm}\right)\right)^{\star}\simeq H^{q}\left(\mathbb{C}_{a}^{\pm}\right),\qquad\frac{1}{p}+\frac{1}{q}=1,\]
and it is known that the reproducing kernel at $\lambda\in\mathbb{C}_{a}^{\pm}$
is given by $k_{\lambda_{n}}\left(z\right)=\frac{i}{2\pi}\left(z-\overline{\lambda}_{n}\right)^{-1}$.
We have the estimate \[
\left\Vert k_{\lambda_{n}}\right\Vert _{H^{q}\left(\mathbb{C}_{a}^{\pm}\right)}\asymp\left|\text{Im}\left(\lambda_{n}\right)-a\right|^{-\frac{1}{p}}.\]
From factorization in Hardy spaces, it can be deduced that the condition
(\ref{Int w}) is equivalent to the so-called\emph{ Carleson condition}
\begin{equation}
\text{inf}_{n\geq1}\prod_{k\not=n}\left|\frac{\lambda_{n}-\lambda_{k}}{\lambda_{n}-\overline{\lambda}_{k}-2ia}\right|>0,\label{Carleson}\end{equation}
and by the above observations, this is equivalent to $\mathcal{K}(\Lambda)$
being uniformly minimal in $H^{q}\left(\mathbb{C}_{a}^{\pm}\right)$.

In this paper, the spaces $X$ that we consider will only be the Hardy
spaces $H^{p}\left(\mathbb{C}_{a}^{\pm}\right)$ or the Paley-Wiener
spaces $PW_{\tau}^{p}$ to be defined later.

We say that $\Lambda$ is an \emph{interpolating sequence} for $X=H^{p}\left(\mathbb{C}_{a}^{\pm}\right)$
or $PW_{\tau}^{p}$, which is denoted by $\Lambda\in\text{Int}\left(X\right)$,
if for each sequence $a=\left(a_{n}\right)_{n\geq1}\in l^{p}$, there
is a fonction $f\in X$ such that \begin{equation}
f\left(\lambda_{n}\right)=a_{n}\left\Vert k_{\lambda_{n}}\right\Vert _{X^{\star}},\quad n\geq1,\label{int def}\end{equation}
and a \emph{complete interpolating sequence} for $X$ ($\Lambda\in\text{Int}_{c}\left(X\right)$)
if the function satisfying (\ref{int def}) is unique. We will give
the explicit formula of $k_{\lambda_{n}}$ for $PW_{\tau}^{p}$ and
an estimate of $\left\Vert k_{\lambda_{n}}\right\Vert _{PW_{\tau}^{q}}$
in the next section. 

A famous result by Shapiro and Shields (\cite{SS61}) states that
in $H^{p}\left(\mathbb{C}_{a}^{\pm}\right)$, the Carleson condition
(\ref{Carleson}) for a sequence $\Lambda$ is equivalent to the interpolation
property of $\Lambda$. It is also known that $\Lambda\in\text{Int}\left(H^{p}\left(\mathbb{C}_{a}^{\pm}\right)\right)$
if and only if $\mathcal{K}\left(\Lambda\right)$ is an unconditional
basis (or, for $p=2$, a Riesz basis) in its span in $H^{q}\left(\mathbb{C}_{a}^{\pm}\right)$.
We refer to \cite[Section C, Chapter 3]{Ni02b} for definitions and
details. It appears that in the Hardy spaces, the uniformly minimal
sequences are exactly the unconditional sequences.

This property of equivalence between uniform minimality and unconditionnality
is not isolated. It turns out to be true in the Bergmann space (\cite{ScS98}),
in the Fock spaces and in the Paley Wiener spaces for certain values
of $p$ (\cite{ScS00}).

In \cite{AH10}, the authors show that uniform minimality implies
unconditionality in a bigger space for certain backward shift invariant
spaces $K_{I}^{p}:=H^{p}\cap I\overline{H_{0}^{p}}$ (considered here
on the unit circle $\mathbb{T}$) for which the Paley-Wiener spaces
are a particular case. We will use here a different approach to obtain
a stronger result. More precisely, considering the unit disk, the
authors of that paper increase the size of the space $K_{I}^{p}$
in two directions: $K_{J}^{s}$, where $s<p$ and $J$ is an inner
multiple of $I$. In our situation of the Paley-Wiener space $PW_{\tau}^{p}$,
which is isometric to $K_{I_{\mathbb{D}}^{\tau}}^{p}$, $I_{\mathbb{D}}^{\tau}(z)=\exp\left(2\tau(z+1)/(z-1)\right)$
for $z\in\mathbb{D}$, we still increase the size of the space by
taking an inner multiple of $I_{\mathbb{D}}^{\tau}$ but we keep the
same $p$.

We have already mentioned that $\Lambda$ is an interpolating sequence
for the Hardy space $H^{p}$, $1<p<\infty$, if and only if the sequence
$\mathcal{K}(\Lambda)$ is an unconditional basis in its span in $H^{q}$
(see e.g. \cite{Ni02b} or \cite{Se04}). Hence, the result of Shapiro
and Sheilds implies that weak interpolation is equivalent to interpolation
in Hardy spaces. A characterization of complete interpolating sequences
in $PW_{\tau}^{p}$ obtained by Lyubarskii and Seip (\cite{LS97})
(involing in particular Carleson's condition and the Muckenhoupt $(A_{p})$
condition on the generating function of $\Lambda$) implies that Paley-Wiener
spaces do not have this property.

Indeed, as shown in \cite{ScS00}, the sequence $\Lambda=\left\{ \lambda_{n}:\: n\in\mathbb{Z}\right\} $
defined by\[
\lambda_{0}=0,\quad\lambda_{n}=n+\frac{\text{sign}\left(n\right)}{2\max\left(p,q\right)},\; n\in\mathbb{Z}\setminus\left\{ 0\right\} ,\]
is weak interpolating in $PW_{\pi}^{p}$ and complete, but does not
satisfy the conditions of Lyubarskii-Seip's result, and so, $\Lambda$
is not a (complete) interpolating sequence in $PW_{\pi}^{p}$. Nevertheless,
as we will discuss in Subsection \ref{sub:Upper-Uniform-Density},
a density argument (see \cite{Se95}) allows to show that this sequence
is actually an interpolating sequence in a bigger space, i.e. in $PW_{\pi+\epsilon}^{p}$,
for arbitrary $\epsilon>0$. This is a special case of the main result
of this paper.
\begin{thm}
\label{thm: MAIN RESULT}Let $\tau>0$, $1<p<\infty$ and $\Lambda$
be a minimal sequence in $PW_{\tau}^{p}$ such that for every $a\in\mathbb{R}$,
$\Lambda\cap\mathbb{C}_{a}^{\pm}$ satisfies the Carleson condition
(\ref{Carleson}). Then, for every $\epsilon>0$, $\Lambda$ is an
interpolating sequence in $PW_{\tau+\epsilon}^{p}$.
\end{thm}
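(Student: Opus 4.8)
The plan is to solve the interpolation problem in $PW_{\tau+\epsilon}^{p}$ constructively, producing a bounded operator $T\colon l^{p}\to PW_{\tau+\epsilon}^{p}$ that sends $a=\left(a_{n}\right)_{n}$ to a function $f=Ta$ with $f\left(\lambda_{n}\right)=a_{n}\left\Vert k_{\lambda_{n}}\right\Vert _{PW_{\tau+\epsilon}^{q}}$, as required by (\ref{int def}). The whole point of passing from $\tau$ to $\tau+\epsilon$ is that the surplus of exponential type $\epsilon$ can be spent on an auxiliary multiplier that restores on $\mathbb{R}$ the decay which mere minimality does not furnish, while the Carleson hypotheses on the traces $\Lambda\cap\mathbb{C}_{a}^{\pm}$ will be fed, through the Shapiro--Shields theorem (\cite{SS61}), into the bound for the resulting series, one half-plane at a time.

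First I would extract from the minimality of $\Lambda$ a generating function. Minimality provides $f_{n}\in PW_{\tau}^{p}$ with $f_{n}\left(\lambda_{k}\right)=\delta_{nk}$, and from these one builds in the usual way an entire $G$ of exponential type at most $\tau$ vanishing (simply) on $\Lambda$, together with its biorthogonal family $g_{n}\left(z\right)=G\left(z\right)/\left(G'\left(\lambda_{n}\right)\left(z-\lambda_{n}\right)\right)$, so that $g_{n}\left(\lambda_{k}\right)=\delta_{nk}$. It is here that the hypothesis of minimality \emph{in} $PW_{\tau}^{p}$ matters: it forces the type of $G$ to be at most $\tau$, which is exactly what guarantees a genuine surplus of type $\epsilon$ once we move to $PW_{\tau+\epsilon}^{p}$.

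Next I would fix an entire function $h$ of exponential type $\epsilon$, nonnegative on $\mathbb{R}$, with $h\left(\lambda_{n}\right)\neq0$ and a prescribed polynomial rate of decay on the real axis (a rescaled square of a sine-type factor of type $\epsilon/2$ will do), and set
\[
f\left(z\right)=h\left(z\right)G\left(z\right)\sum_{n}\frac{a_{n}\left\Vert k_{\lambda_{n}}\right\Vert _{PW_{\tau+\epsilon}^{q}}}{h\left(\lambda_{n}\right)G'\left(\lambda_{n}\right)\left(z-\lambda_{n}\right)}.
\]
Since $g_{n}\left(\lambda_{k}\right)=\delta_{nk}$ and $h\left(z\right)/h\left(\lambda_{n}\right)=1$ at $z=\lambda_{n}$, this $f$ takes the prescribed values, and as a product of a factor of type $\tau$ and one of type $\epsilon$ it has type $\tau+\epsilon$; the remaining, and only substantial, point is that $f|_{\mathbb{R}}\in L^{p}$ with $\left\Vert f\right\Vert _{p}\lesssim\left\Vert a\right\Vert _{l^{p}}$.

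This norm estimate is the crux and the step I expect to cause the most trouble. Using the unimodular factors $e^{\pm i\left(\tau+\epsilon\right)z}$ that identify $PW_{\tau+\epsilon}^{p}$ with a pair of Hardy spaces of the half-planes (note that the poles $z=\lambda_{n}$ are cancelled by the zeros of $G$, so each summand is entire), I would split $\Lambda=\Lambda_{+}\cup\Lambda_{-}$ by the sign of $\text{Im}\,\lambda_{n}$ and reduce the bound for the corresponding pieces of $f$ to Hardy-space interpolation inequalities for $\Lambda_{\pm}$; these hold, with constants controlled by the Carleson product (\ref{Carleson}), precisely by Shapiro--Shields. The delicate balancing act is in the choice of $h$: its decay on $\mathbb{R}$ must be fast enough to dominate the weights $\left\Vert k_{\lambda_{n}}\right\Vert _{PW_{\tau+\epsilon}^{q}}$, whose growth in the imaginary direction (to be computed in the next section) carries exponential factors $e^{\left(\tau+\epsilon\right)\left|\text{Im}\,\lambda_{n}\right|}$, so that points far from $\mathbb{R}$ contribute a summable amount, while at the same time $hG$ must remain of type exactly $\tau+\epsilon$ and compatible with the Hardy estimates in every half-plane. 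Producing one $h$ of type $\epsilon$ that meets all of these demands simultaneously---this is where the slack $\epsilon>0$ is indispensable, and where the hypothesis of Carleson at every height $a\in\mathbb{R}$ is used to make the control uniform across the sequence---is the heart of the argument.
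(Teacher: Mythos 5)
Your high-level plan --- spend the surplus type $\epsilon$ on a multiplier with polynomial decay on $\mathbb{R}$, split $\Lambda$ by half-planes, and let the Carleson hypothesis control the resulting sums --- is indeed the strategy of the paper (it is Beurling's multiplier trick). But your implementation has a defect that no ``delicate choice of $h$'' can repair: you use one \emph{fixed} multiplier renormalized by $1/h\left(\lambda_{n}\right)$, whereas what is needed is the \emph{translate} $H_{\epsilon}\left(z-\lambda_{n}\right)$. Since $h$ must decay on $\mathbb{R}$, the factors $1/h\left(\lambda_{n}\right)$ grow polynomially in $\left|\lambda_{n}\right|$, and the decay of $h\left(z\right)$, anchored at the origin rather than at $\lambda_{n}$, cannot compensate. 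Concretely, take $\tau=\pi$, $\Lambda=\mathbb{Z}$ (minimal in $PW_{\pi}^{p}$ with $f_{n}\left(z\right)=\frac{\sin\pi\left(z-n\right)}{\pi\left(z-n\right)}$, and satisfying (\ref{Carleson}) in every $\mathbb{C}_{a}^{\pm}$), $G\left(z\right)=\sin\pi z$ and $h\left(z\right)=\left(\frac{\sin\left(\epsilon z/2\right)}{\epsilon z/2}\right)^{2}$. For the single datum $a=e_{N}$, with $N$ chosen so that $h\left(N\right)\asymp N^{-2}$, your formula gives $f\left(z\right)=\frac{c_{N}}{h\left(N\right)\pi\left(-1\right)^{N}}\cdot\frac{h\left(z\right)\sin\pi z}{z-N}$, and already the $L^{p}$ norm restricted to $\left[-1,1\right]$ (where $\left|z-N\right|\leq N+1$ and $h\left|\sin\pi\cdot\right|$ has a fixed positive norm) yields $\left\Vert f\right\Vert _{p}\gtrsim N^{2}\cdot N^{-1}\left\Vert a\right\Vert _{l^{p}}=N\left\Vert a\right\Vert _{l^{p}}$. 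So the estimate $\left\Vert f\right\Vert _{p}\lesssim\left\Vert a\right\Vert _{l^{p}}$, which you yourself single out as ``the remaining, and only substantial, point'', is not merely unproven: it is false for your interpolant. (A secondary issue: minimality hands you biorthogonal functions $f_{n}\in PW_{\tau}^{p}$, but it does not hand you a generating function $G$ of type at most $\tau$ vanishing simply and exactly on $\Lambda$, nor any control on $G'\left(\lambda_{n}\right)$; the paper never needs such a $G$.)

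The paper's proof avoids both problems. It takes $f\left(z\right)=\sum_{n}a_{n}f_{n}\left(z\right)H_{\epsilon}\left(z-\lambda_{n}\right)$, with the multiplier translated to each node so that every summand carries decay of order $\left(1+\left|z-\lambda_{n}\right|\right)^{-1}$ centered at $\lambda_{n}$; and --- this is the key idea your proposal lacks --- it never bounds $\left\Vert f\right\Vert _{p}$ itself, but the quotient norm $\inf\left\{ \left\Vert f-g\right\Vert _{p}:\; g\in PW_{\tau+\epsilon}^{p},\; g|\Lambda=0\right\}$, which is the correct quantity for interpolation and depends only on the data, not on the representative. This distance is estimated by a Shapiro--Shields type duality: conjugate by $e^{i\left(\tau+\epsilon\right)z}$ to land in a Hardy space of a half-plane, divide by the Blaschke product of $\Lambda\cap\mathbb{C}_{-\eta}^{+}$, apply Cauchy's formula to collapse the pairing onto the values at the nodes, then use H\"older's inequality and the Carleson measure embedding (\ref{eq:Carleson measure}). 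Note that this is how the Carleson hypothesis actually enters --- through the embedding inside the duality estimate, not as a black-box citation of the Shapiro--Shields interpolation theorem: half-plane Hardy interpolants for $\Lambda_{\pm}$ are not entire functions of exponential type and cannot be glued into a Paley--Wiener function, so your plan to ``reduce to Hardy-space interpolation inequalities'' cannot close the argument as stated. Interestingly, in the duality computation the Cauchy formula evaluates everything at the $\lambda_{n}$, where your normalizing factors $1/\left(h\left(\lambda_{n}\right)G'\left(\lambda_{n}\right)\right)$ cancel exactly; so your interpolant would in fact serve as a representative once the quotient-norm argument is in place. This confirms that the genuine gap is the missing duality step, not the interpolation formula.
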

It should be emphasized that surprisingly, we do not need to require
uniform minimality here. The Carleson condition allows in a way to
compensate this lack of uniformity. As a consequence of this result,
we will see that if $\Lambda\in\text{Int}_{w}\left(PW_{\tau}^{p}\right)$,
then, for every $\epsilon>0$, $\Lambda\in\text{Int}\left(PW_{\tau+\epsilon}^{p}\right)$. 

Finally, we recall that a positive measure $\sigma$ on some half-plane
$\mathbb{C}_{a}^{\pm}$ is called a \emph{Carleson measure} in $\mathbb{C}_{a}^{\pm}$
if\begin{equation}
\sup_{Q}\frac{\sigma\left(Q\right)}{h}<\infty,\label{eq:Carleson measure}\end{equation}
where the supremum is taken over all the squares $Q$ of the form\[
Q=\left\{ z=x+iy\in\mathbb{C}_{a}^{\pm}:\: x_{0}<x<x_{0}+h,\;\left|y-a\right|<h\right\} ,\]
for $x_{0}\in\mathbb{R}$ and $h>0$. It is well known from a result
of Carleson (see e.g. \cite[pp. 61 and 278]{Ga81}) that $(1)\Rightarrow(2)\Leftrightarrow(3)$,
where

$(1)$ The sequence $\Lambda=\left\{ \lambda_{n}:\: n\geq1\right\} \subset\mathbb{C}_{a}^{\pm}$
satisfies the Carleson condition (\ref{Carleson});

$(2)$ The measure\[
\sigma_{\Lambda}:=\sum_{n\geq1}\left|\text{Im}\left(\lambda_{n}\right)-a\right|\delta_{\lambda_{n}}\]
is a Carleson measure in $\mathbb{C}_{a}^{\pm}$;

$(3)$ For every $f\in H^{p}\left(\mathbb{C}_{a}^{\pm}\right)$, \[
\int\left|f\right|^{p}d\sigma_{\Lambda}\lesssim\left\Vert f\right\Vert _{p}^{p}.\]
It is also known that $(2)$ or $(3)$ together with the \emph{uniform
pseudo-hyperbolic separation} of $\Lambda$ in $\mathbb{C}_{a}^{\pm}$,
which is \begin{equation}
\inf_{n\neq m}\left|\frac{\lambda_{n}-\lambda_{m}}{\lambda_{n}-\overline{\lambda}_{m}-2ia}\right|>0,\label{unif psh separation}\end{equation}
imply $(1)$. Moreover, if $\Lambda$ lies in a strip of finite width,
$i.e.$ $M:=\sup_{n}\left|\text{Im}\left(\lambda_{n}\right)\right|<\infty$,
the Carleson condition (\ref{Carleson}) in $\mathbb{C}_{\mp2M}^{\pm}$
is equivalent to the \emph{uniform separation condition} \begin{equation}
\inf_{n\neq m}\left|\lambda_{n}-\lambda_{m}\right|>0\label{uniform separation}\end{equation}
which is, in this case, equivalent to the uniform pseudo-hyperbolic
separation since the pseudo-hyperbolic metric defined in $\mathbb{C}_{\mp2M}^{\pm}$
by\[
\rho\left(\lambda,\mu\right)=\left|\frac{\lambda-\mu}{\lambda-\overline{\mu}-2iM}\right|\]
is locally equivalent to the euclidian distance.

This paper is organized as follows. The next section will be devoted
to interpolation in Paley-Wiener spaces. After having recalled some
properties of these spaces, we discuss links between density and interpolation
(in the case of the sequence $\Lambda$ lying in a strip of finite
width) and prove our main result and some consequences. 

In the third section, we define and discuss weighted interpolation.
Indeed, after having defined weighted interpolation in Hardy and Paley-Wiener
spaces, we use a result of McPhail (\cite{McP90}) characterizing
the weighted interpolation sequences in $H^{p}\left(\mathbb{C}_{a}^{\pm}\right)$
and technics of Theorem \ref{thm: MAIN RESULT} to prove that a minimal
sequence in $PW_{\tau}^{p}$ such that its intersection with every
half-plane satisfies the McPhail condition is a weighted interpolation
sequence in $PW_{\tau+\epsilon}^{p}$, for every $\epsilon>0$.

This theorem will be used in the fourth and last section where we
consider controllability of linear differential systems, establishing
a link between exact and a certain weak type of controllability.

\section{Interpolation in Paley-Wiener Spaces}

We begin by recalling some facts about Paley-Wiener spaces. For $\tau>0$,
the Paley-Wiener space $PW_{\tau}^{p}$ consists of all entire functions
of exponential type at most $\tau$ satisfying\[
\left\Vert f\right\Vert _{p}^{p}=\int_{-\infty}^{+\infty}\left|f(t)\right|^{p}dt<\infty.\]
A result, known as Plancherel-Polya inequality (see e.g. \cite{Le96}
or \cite[p.95]{Se04}) states that if $f\in PW_{\tau}^{p}$, then\begin{equation}
\int_{-\infty}^{+\infty}\left|f(x+iy)\right|^{p}dx\leq e^{p\tau|y|}\left\Vert f\right\Vert _{p}^{p}.\label{eq:plancherel polya}\end{equation}
In particular, it follows that for every $f\in PW_{\tau}^{p}$, $z\mapsto e^{\pm i\tau z}f(z)$
belongs to $H^{p}\left(\mathbb{C}_{a}^{\pm}\right)$, with same norm
as $f$. It also follows that translation is an isomorphism from $PW_{\tau}^{p}$
into itself. 

Using Cauchy's formula and Cauchy's theorem in an appropriate way,
we see that \[
k_{\lambda}(z)=\frac{\sin\tau\left(z-\overline{\lambda}\right)}{\tau\left(z-\overline{\lambda}\right)}\]
is the reproducing kernel of $PW_{\tau}^{p}$ associated to $\lambda$
and we obtain the norm estimate\[
\left\Vert k_{\lambda}\right\Vert _{PW_{\tau}^{q}}\asymp\left(1+\left|\text{Im}(\lambda)\right|\right)^{-\frac{1}{p}}e^{\tau\left|\text{Im}(\lambda)\right|}.\]
This implies a useful pointwise estimate; recalling that for $\frac{1}{p}+\frac{1}{q}=1$,
$\left(PW_{\tau}^{p}\right)^{\star}\simeq PW_{\tau}^{q}$, we deduce
that there exists a constant $C=C(p)$ such that for every $f\in PW_{\tau}^{p}$,
we have\begin{equation}
\left|f(z)\right|\leq C\left\Vert f\right\Vert _{p}\left(1+\left|\text{Im}\left(z\right)\right|\right)^{-\frac{1}{p}}e^{\tau\left|\text{Im}(z)\right|},\quad z\in\mathbb{C}.\label{pointwise est. PW}\end{equation}
The Paley-Wiener theorem states that\[
L^{2}(0,2\tau)\simeq\mathcal{F}L^{2}(-\tau,\tau)=PW_{\tau}^{2},\]
where $\mathcal{F}$ denotes the Fourier transform\[
\mathcal{F}\phi(z)=\int_{-\tau}^{\tau}\phi(t)e^{-itz}dt.\]
 Hence, another approach to interpolation problems in $PW_{\tau}^{2}$
is to consider geometric properties of exponentials in $L^{2}(0,2\tau)$,
which is a famous problem with several applications, see e.g. \cite{HNP81}
or \cite{AI95}.

From the definitions given previously, a sequence $\Lambda$ is interpolating
in $PW_{\tau}^{p}$ if, for every sequence $a=\left(a_{n}\right)_{n\geq1}\in l^{p}$,
it is possible to find a function $f\in PW_{\tau}^{p}$ such that\begin{equation}
f\left(\lambda_{n}\right)\left(1+\left|\text{Im}\left(\lambda_{n}\right)\right|\right)^{\frac{1}{p}}e^{-\tau\left|\text{Im}\left(\lambda_{n}\right)\right|}=a_{n},\quad n\geq1.\label{eq:def int PW}\end{equation}

The condition of weak interpolation for $\Lambda$ in $PW_{\tau}^{p}$
can be reformulated as the existence of a sequence of functions $\left(f_{n}\right)_{n\geq1}\subset PW_{\tau}^{p}$
such that\[
f_{n}\left(\lambda_{k}\right)\left(1+\left|\text{Im}\left(\lambda_{n}\right)\right|\right)^{\frac{1}{p}}e^{-\tau\left|\text{Im}\left(\lambda_{n}\right)\right|}=\delta_{nk},\quad n\geq1,\]
and $\sup_{n\geq1}\left\Vert f_{n}\right\Vert <\infty$.

In particular, if $\Lambda\in\text{Int}_{w}\left(PW_{\tau}^{p}\right)$,
then the Plancherel-Polya inequality (\ref{eq:plancherel polya})
implies that the sequence $\left(e^{\pm i\tau\cdot}f_{n}\right)_{n\geq1}$is
in $H^{p}\left(\mathbb{C}_{a}^{\pm}\right)$, with uniform control
of the norm. So, it is easy to see that $\Lambda\cap\mathbb{C}_{a}^{\pm}\in\text{Int}_{w}\left(H^{p}\left(\mathbb{C}_{a\pm\eta}^{\pm}\right)\right)$,
for every $\eta>0$, and hence satisfies the Carleson condition in
the corresponding half-plane, in view of Shapiro-Shields 's theorem.
Moreover, we can affirm that the sequence \[
\Lambda_{a,\eta}:=\Lambda\cap\left\{ z\in\mathbb{C}:\;0<\left|\text{Im}(z)-a\right|<2\eta\right\} \]
is uniformly separated, in view of the discussion in the end of the
previous section. These two observations imply the following result
(for more details, see \cite{Gau11}).
\begin{prop}
\label{w Int implies Carleson}If $\Lambda$ is a weak interpolating
sequence in $PW_{\tau}^{p}$, then, for every $a\in\mathbb{R}$, the
sequence $\Lambda\cap\mathbb{C}_{a}^{\pm}$ satisfies the Carleson
condition (\ref{Carleson}).
\end{prop}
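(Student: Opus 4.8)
The plan is to push the weak interpolation data from $PW_\tau^p$ into a Hardy space of a half-plane, where by Shapiro--Shields weak interpolation coincides with the Carleson condition, and then to read that condition back into $\mathbb{C}_a^\pm$. Fix $a\in\mathbb{R}$ and treat $\mathbb{C}_a^+$, the case of $\mathbb{C}_a^-$ being symmetric (replace $e^{i\tau\cdot}$ by $e^{-i\tau\cdot}$). Let $(f_n)$ realize weak interpolation in $PW_\tau^p$, so $\sup_n\|f_n\|_p<\infty$, $f_n(\lambda_k)=0$ for $k\neq n$, and $f_n(\lambda_n)=\|k_{\lambda_n}\|_{PW_\tau^q}$. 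Put $g_n:=e^{i\tau\cdot}f_n$. By the Plancherel--Polya inequality (\ref{eq:plancherel polya}) each $g_n$ belongs to $H^p(\mathbb{C}_b^+)$ for every $b$, with $\sup_n\|g_n\|_{H^p(\mathbb{C}_b^+)}<\infty$; moreover $g_n$ still vanishes at every $\lambda_k$, $k\neq n$, and $|g_n(\lambda_n)|=e^{-\tau\,\text{Im}\,\lambda_n}\|k_{\lambda_n}\|_{PW_\tau^q}$.

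The crux is to reconcile the normalization of $g_n(\lambda_n)$ with the Hardy reproducing-kernel normalization. From $\|k_{\lambda_n}\|_{PW_\tau^q}\asymp(1+|\text{Im}\,\lambda_n|)^{-1/p}e^{\tau|\text{Im}\,\lambda_n|}$ one computes $|g_n(\lambda_n)|\asymp(1+|\text{Im}\,\lambda_n|)^{-1/p}$ uniformly for $\lambda_n\in\mathbb{C}_a^+$ (constants depending on $a,\tau$); note this stays bounded as $\lambda_n$ approaches the line $\text{Im}=a$. The reproducing kernel of $H^q(\mathbb{C}_b^+)$ at $\lambda_n$, however, has norm $\asymp(\text{Im}\,\lambda_n-b)^{-1/p}$, which blows up as $\lambda_n$ nears the boundary $\text{Im}=b$. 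This is the one genuine difficulty: the two normalizations cannot match at the boundary $b=a$, since points of $\Lambda\cap\mathbb{C}_a^+$ may accumulate on $\text{Im}=a$. I resolve it by enlarging the half-plane: fix $\eta>0$ and work in $\mathbb{C}_{a-\eta}^+\supset\mathbb{C}_a^+$, so that every $\lambda_n\in\Lambda\cap\mathbb{C}_a^+$ sits at distance at least $\eta$ from $\partial\mathbb{C}_{a-\eta}^+$. Then $\|k_{\lambda_n}\|_{H^q(\mathbb{C}_{a-\eta}^+)}\asymp(\text{Im}\,\lambda_n-a+\eta)^{-1/p}$ is bounded above, and $\|k_{\lambda_n}\|_{H^q(\mathbb{C}_{a-\eta}^+)}/|g_n(\lambda_n)|\asymp\left((1+|\text{Im}\,\lambda_n|)/(\text{Im}\,\lambda_n-a+\eta)\right)^{1/p}$ is bounded uniformly over $\lambda_n\in\Lambda\cap\mathbb{C}_a^+$. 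Consequently the rescaled functions $h_n:=(\|k_{\lambda_n}\|_{H^q(\mathbb{C}_{a-\eta}^+)}/g_n(\lambda_n))\,g_n$ have uniformly bounded $H^p(\mathbb{C}_{a-\eta}^+)$-norm and realize weak interpolation, i.e. $\Lambda\cap\mathbb{C}_a^+\in\text{Int}_w(H^p(\mathbb{C}_{a-\eta}^+))$.

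It remains to descend from $\mathbb{C}_{a-\eta}^+$ to $\mathbb{C}_a^+$. Since $H^p(\mathbb{C}_{a-\eta}^+)$ is reflexive for $1<p<\infty$, the Shapiro--Shields theorem identifies the weak interpolation just obtained with the Carleson condition (\ref{Carleson}) for $\Lambda\cap\mathbb{C}_a^+$ relative to the boundary $\text{Im}=a-\eta$. To move the boundary up to $\text{Im}=a$ I compare the pseudohyperbolic factors termwise over the same index set: for $\lambda,\mu\in\mathbb{C}_a^+$ the quantity $\text{Im}\,\lambda+\text{Im}\,\mu-2a$ is positive and no larger than $\text{Im}\,\lambda+\text{Im}\,\mu-2(a-\eta)$, so $|\lambda-\overline{\mu}-2ia|\le|\lambda-\overline{\mu}-2i(a-\eta)|$, and hence each factor $|(\lambda_n-\lambda_k)/(\lambda_n-\overline{\lambda}_k-2ia)|$ is at least the corresponding factor for $a-\eta$. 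As all these factors lie in $[0,1]$, the Carleson product relative to $\text{Im}=a$ dominates the one relative to $\text{Im}=a-\eta$, whose infimum is positive; therefore $\Lambda\cap\mathbb{C}_a^+$ satisfies (\ref{Carleson}) in $\mathbb{C}_a^+$. As announced, the only real obstacle is the failure of the $PW_\tau$ kernel to grow near $\text{Im}=a$ in step two; once this is absorbed by the enlargement, Shapiro--Shields and the monotonicity of the pseudohyperbolic factors finish the argument.
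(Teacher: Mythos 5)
Your proof is correct. Its skeleton coincides with the paper's own argument (which is only sketched there, the details being deferred to \cite{Gau11}): push the dual system into a Hardy space by Plancherel--Polya (\ref{eq:plancherel polya}), renormalize against the Hardy kernels of the \emph{enlarged} half-plane $\mathbb{C}_{a-\eta}^{+}$ --- your bound on the ratio $\left\Vert k_{\lambda_{n}}\right\Vert _{H^{q}(\mathbb{C}_{a-\eta}^{+})}/\left|g_{n}(\lambda_{n})\right|$, valid precisely because every point of $\Lambda\cap\mathbb{C}_{a}^{+}$ stays at distance at least $\eta$ from the new boundary line, is exactly the point the paper compresses into ``it is easy to see'' --- and then apply Shapiro--Shields to obtain the Carleson condition relative to the line $\text{Im}\,z=a-\eta$. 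Where you genuinely diverge is the descent back to $\mathbb{C}_{a}^{+}$. The paper gets there by combining \emph{two} observations: the Carleson condition in the enlarged half-plane, and the uniform Euclidean separation of the strip part $\Lambda_{a,\eta}$, assembled through the implication ``Carleson measure plus uniform pseudo-hyperbolic separation implies (\ref{Carleson})'' recalled at the end of Section 1. You replace all of that by the termwise monotonicity $\left|\lambda-\overline{\mu}-2ia\right|\leq\left|\lambda-\overline{\mu}-2i(a-\eta)\right|$ for $\lambda,\mu\in\mathbb{C}_{a}^{+}$, whence each factor relative to $a$ dominates the corresponding factor relative to $a-\eta$ and, all factors lying in $[0,1]$, the full product relative to $a$ dominates the one whose infimum Shapiro--Shields has already shown to be positive. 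This is more elementary and makes the proof self-contained, bypassing the Carleson-measure characterization and the separation discussion entirely. What the paper's longer route buys is the separation statement itself, which it reuses later (Subsection \ref{sub:Upper-Uniform-Density}) to assert that weak interpolating sequences in $PW_{\tau}^{p}$ are uniformly separated; your argument does not produce that byproduct, but as a proof of the proposition it is cleaner and complete.
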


\subsection{\label{sub:Upper-Uniform-Density}Upper Uniform Density and Interpolation}

In this subsection, we assume that the sequence $\Lambda$ satisfies\begin{equation}
M:=\sup_{n\geq1}\left|\text{Im}\left(\lambda_{n}\right)\right|<\infty,\label{lambda incluse dans bande}\end{equation}
which means that $\Lambda$ lies in a strip of finite width parallel
to the real axis. We define the \emph{upper uniform density} $\mathcal{D}_{\Lambda}^{+}$
by\[
\mathcal{D}_{\Lambda}^{+}:=\lim_{r\to\infty}\frac{n_{\Lambda}^{+}\left(r\right)}{r},\]
where \[
n_{\Lambda}^{+}\left(r\right):=\sup_{x\in\mathbb{R}}\left|\text{Re}\left(\Lambda\right)\cap\left[x,x+r\right]\right|,\]
counting multiplicities. 

The reader would have remembered that, from previous remarks, Proposition
\ref{w Int implies Carleson} implies that a weak interpolating sequence
in a Paley-Wiener space $PW_{\tau}^{p}$ satisfies the uniform separation
condition.

The next theorem is stated as follows in a paper of Seip (\cite[Theorem 2.2]{Se95})
the proof of which is based on a more general result by Beurling (\cite{Be89}).
\begin{thm}
(\cite{Se95})\label{thm:(densite et interpolation)} Let $\Lambda$
be a sequence satisfying (\ref{lambda incluse dans bande}).

If $\Lambda$ is uniformly separated and $\mathcal{D}_{\Lambda}^{+}<\frac{\tau}{\pi}$
, then $\Lambda\in\text{Int}\left(PW_{\tau}^{p}\right)$. Conversely,
if $\Lambda\in\text{Int}\left(PW_{\tau}^{p}\right)$, then, $\Lambda$
is necessarily uniformly separated and $\mathcal{D}_{\Lambda}^{+}\leq\frac{\tau}{\pi}$.\end{thm}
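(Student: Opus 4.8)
The plan is to treat the two implications separately, since they rest on genuinely different ideas. Throughout I use that, by the strip assumption (\ref{lambda incluse dans bande}), the imaginary parts are bounded, so the weights $\left(1+\left|\text{Im}\,\lambda_{n}\right|\right)^{1/p}e^{-\tau\left|\text{Im}\,\lambda_{n}\right|}$ appearing in (\ref{eq:def int PW}) are bounded above and below by positive constants. Consequently the weighted interpolation problem is equivalent to the unweighted one: $\Lambda\in\text{Int}\left(PW_{\tau}^{p}\right)$ if and only if for every $(b_{n})_{n}\in l^{p}$ there is $f\in PW_{\tau}^{p}$ with $f(\lambda_{n})=b_{n}$.

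\emph{Sufficiency.} Assume $\Lambda$ uniformly separated and $\pi\mathcal{D}_{\Lambda}^{+}<\tau$, and set $\delta:=\tau-\pi\mathcal{D}_{\Lambda}^{+}>0$, the spare amount of exponential type that drives the construction. First I would manufacture a \emph{generating function} for $\Lambda$: an entire function $G$ of exponential type $\pi\mathcal{D}_{\Lambda}^{+}$ whose zero set is exactly $\Lambda$ (simple zeros), built as a symmetrized canonical product over the $\lambda_{n}$. The upper density hypothesis controls the type of $G$, while the uniform separation (\ref{uniform separation}), combined with the strip assumption, yields the essential two-sided control $|G'(\lambda_{n})|\asymp 1$, preventing the derivative from degenerating at the nodes. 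With $G$ in hand I would form the Lagrange-type interpolation operator
\[
T:(b_{n})_{n}\longmapsto f(z)=\sum_{n}b_{n}\,\frac{G(z)}{G'(\lambda_{n})(z-\lambda_{n})},
\]
which satisfies $f(\lambda_{m})=b_{m}$ termwise by construction.

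The heart of the matter is to show that $T$ maps $l^{p}$ boundedly into $PW_{\tau}^{p}$. Each summand is entire of exponential type at most $\pi\mathcal{D}_{\Lambda}^{+}=\tau-\delta<\tau$; multiplying by $e^{i\tau z}$ and invoking the Plancherel--Polya inequality (\ref{eq:plancherel polya}) transfers the estimate to a uniform $l^{p}\to H^{p}\left(\mathbb{C}_{a}^{+}\right)$ bound in a half-plane. There the decay furnished by the spare type $\delta$, together with uniform separation, makes the weighted node-counting measure a Carleson measure, so that the required inequality reduces to a Carleson-embedding/Muckenhoupt $(A_{p})$ estimate. For $p=2$ this is a clean Hilbert-space computation, but for general $1<p<\infty$ this $L^{p}$ bound is the genuinely hard analytic step, and it is precisely where the real-variable machinery underlying Seip's and Beurling's arguments enters; for irregularly distributed $\Lambda$ one bypasses the explicit $G$ altogether and argues through Beurling's weak-limit characterization (\cite{Be89}). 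This is the step I expect to be the main obstacle.

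\emph{Necessity.} Suppose now $\Lambda\in\text{Int}\left(PW_{\tau}^{p}\right)$. Uniform separation is the easier half: the existence of a uniformly bounded interpolation, i.e. uniform minimality of $\mathcal{K}(\Lambda)$, together with the pointwise bound (\ref{pointwise est. PW}), forbids two nodes from being arbitrarily close, since interpolating a unit at one and zero at the other would otherwise force an interpolant whose $PW_{\tau}^{p}$ norm blows up. The density estimate $\mathcal{D}_{\Lambda}^{+}\le\tau/\pi$ is the deep part and is where Beurling's comparison principle (\cite{Be89}) is invoked: passing to weak limits of translates $\Lambda-x_{k}$ one extracts a limit set that is still interpolating and realizes the upper density $\mathcal{D}_{\Lambda}^{+}$, and comparing it with the critical lattice $\frac{\pi}{\tau}\mathbb{Z}$ --- which is complete interpolating and has density exactly $\tau/\pi$ --- shows that exceeding $\tau/\pi$ would place more nodes in long windows than the space can accommodate, contradicting the uniform interpolation bound. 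The loss of strictness in passing to the limit is exactly what accounts for the non-strict inequality in the conclusion, in contrast with the strict hypothesis needed for sufficiency.
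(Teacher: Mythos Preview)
The paper does not prove this theorem at all: it is quoted verbatim from Seip \cite[Theorem~2.2]{Se95}, with the remark that its proof rests on a more general result of Beurling \cite{Be89}. So there is no ``paper's own proof'' to compare against; the relevant benchmark is the Seip/Beurling argument itself.

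Your outline points in the right direction and correctly identifies where the real work lies (the $L^{p}$ boundedness of the Lagrange operator for sufficiency, and Beurling's weak-limit comparison for the density bound in necessity). That said, what you have written is a roadmap rather than a proof, and a couple of the intermediate claims are stated more cleanly than the mathematics actually allows. First, for a general uniformly separated sequence in a strip there is no reason the canonical product $G$ has exponential type \emph{exactly} $\pi\mathcal{D}_{\Lambda}^{+}$, nor that $|G'(\lambda_{n})|\asymp 1$; these require either additional regularity of $\Lambda$ or a more delicate construction (e.g.\ grouping and balayage as in Beurling's argument), and you yourself concede this when you say one ``bypasses the explicit $G$ altogether'' in the irregular case. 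Second, the passage from ``spare type $\delta$'' to ``the node measure is Carleson, hence the operator is bounded $l^{p}\to H^{p}$'' hides exactly the estimate that constitutes the theorem; invoking Muckenhoupt $(A_{p})$ here is circular unless you have already established the relevant weight condition, which again is the content of the Seip/Lyubarskii--Seip machinery.

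In short: your sketch is consistent with how the result is actually proved in the literature the paper cites, but the steps you flag as ``the main obstacle'' and ``the deep part'' are precisely the theorem, and you have not supplied arguments for them. Since the present paper only quotes the result, this level of sketch is appropriate as commentary, but it should not be presented as a proof.
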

\begin{cor}
\label{cor:densite n+sg/2p'}The sequence $\Lambda=\left\{ \lambda_{n}:\: n\in\mathbb{Z}\right\} $
defined by \[
\lambda_{0}=0\text{ et }\lambda_{n}=n+\frac{\text{sign}(n)}{2\max\left(p,q\right)},\quad n\neq0,\]
is interpolating in $PW_{\pi+\epsilon}^{p}$, for every $\epsilon>0$.\end{cor}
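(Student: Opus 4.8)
The plan is to deduce this directly from Theorem \ref{thm:(densite et interpolation)} (Seip) applied with $\tau=\pi+\epsilon$. For this I must check its three hypotheses for the given $\Lambda$: that $\Lambda$ lies in a strip of finite width, that it is uniformly separated, and that the upper uniform density satisfies $\mathcal{D}_{\Lambda}^{+}<\frac{\pi+\epsilon}{\pi}$. The first two are immediate, so the real content is the density computation.

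First, every $\lambda_{n}$ is real, so $\text{Im}(\lambda_{n})=0$ for all $n$ and condition (\ref{lambda incluse dans bande}) holds with $M=0$; in fact $\Lambda$ lies on the real axis. Next, for uniform separation I would order the points as $\cdots<\lambda_{-2}<\lambda_{-1}<\lambda_{0}=0<\lambda_{1}<\lambda_{2}<\cdots$ and examine consecutive gaps. Writing $c=\frac{1}{2\max(p,q)}\in(0,\frac12]$, two consecutive points of the same sign are spaced exactly $1$ apart, while $|\lambda_{1}-\lambda_{0}|=|\lambda_{0}-\lambda_{-1}|=1+c$. Hence the smallest gap is $1$, so that $\inf_{n\neq m}|\lambda_{n}-\lambda_{m}|=1>0$ and the uniform separation condition (\ref{uniform separation}) holds.

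The key step is the upper uniform density. Since $\lambda_{n}=n+\text{sign}(n)\,c$ with $|\text{sign}(n)\,c|\le\frac12$ bounded, the set $\text{Re}(\Lambda)$ is a bounded (indeed separated) perturbation of $\mathbb{Z}$. Counting uniformly in $x$ the number of points of $\text{Re}(\Lambda)$ in $[x,x+r]$, I expect $n_{\Lambda}^{+}(r)=r+O(1)$, so that $\mathcal{D}_{\Lambda}^{+}=\lim_{r\to\infty}\frac{n_{\Lambda}^{+}(r)}{r}=1$. The only care needed here is to verify that the limit is exactly $1$ (one point per unit length asymptotically) rather than merely being bounded; this follows because the shifts $\pm c$ never merge two points and never create an extra one in a long interval.

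Finally, for every $\epsilon>0$ we have $\mathcal{D}_{\Lambda}^{+}=1<1+\frac{\epsilon}{\pi}=\frac{\pi+\epsilon}{\pi}$, so all hypotheses of Theorem \ref{thm:(densite et interpolation)} are satisfied with $\tau=\pi+\epsilon$, yielding $\Lambda\in\text{Int}(PW_{\pi+\epsilon}^{p})$. I do not anticipate a genuine obstacle: the computation is elementary, and the one conceptually important point is that the density equals exactly $\tau/\pi$ when $\tau=\pi$, where the \emph{strict} inequality fails. This is precisely why $\Lambda$ fails to be interpolating in $PW_{\pi}^{p}$ itself (consistent with the discussion preceding the corollary), and why passing to the strictly larger space $PW_{\pi+\epsilon}^{p}$ is exactly what restores the strict density bound.
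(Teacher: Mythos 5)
Your proposal is correct and follows exactly the paper's own route: the paper likewise verifies uniform separation, notes that the upper uniform density equals $1$, and applies Theorem \ref{thm:(densite et interpolation)} with $\tau=\pi+\epsilon$ so that the strict inequality $\mathcal{D}_{\Lambda}^{+}=1<\frac{\pi+\epsilon}{\pi}$ holds. Your write-up simply fills in the elementary gap and density computations that the paper declares ``obvious'' and ``clear,'' and correctly identifies why the argument fails at $\tau=\pi$ itself.
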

\begin{proof}
We have already mentioned that this sequence is uniformly minimal.
The uniform separation condition is obvious. Its upper uniform density
is clearly equal to $1$. The Corollary now follows from Theorem \ref{thm:(densite et interpolation)}.
\end{proof}

\subsection{Proof of Main Result}

We recall our main theorem, previously stated in the first section.
\begin{thm*}
(Theorem \ref{thm: MAIN RESULT}) Let $\tau>0$, $1<p<\infty$ and
$\Lambda$ be a minimal sequence in $PW_{\tau}^{p}$ such that for
every $a\in\mathbb{R}$, $\Lambda\cap\mathbb{C}_{a}^{\pm}$ satisfies
the Carleson condition (\ref{Carleson}). Then, for every $\epsilon>0$,
$\Lambda$ is an interpolating sequence in $PW_{\tau+\epsilon}^{p}$.\end{thm*}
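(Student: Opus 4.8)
The plan is to establish interpolation by constructing, for each prescribed data sequence $(a_{n})\in l^{p}$, an explicit function $f\in PW_{\tau+\epsilon}^{p}$ realizing the values (\ref{eq:def int PW}), and then to bound $\left\Vert f\right\Vert _{p}$ by $\left\Vert (a_{n})\right\Vert _{l^{p}}$. The whole difficulty is this last norm bound: minimality by itself produces interpolating data but no control on it, and the gain of $\epsilon$ in the exponential type is what will create the room to recover that control.

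First I would record the biorthogonal functions. Minimality of $\Lambda$ in $PW_{\tau}^{p}$ furnishes $f_{n}\in PW_{\tau}^{p}$ with $f_{n}(\lambda_{k})=\delta_{nk}$. To move into the larger space while preserving the interpolation conditions, I would multiply each $f_{n}$ by a fixed type-$\epsilon$ bump, for instance a squared sinc factor $m_{n}(z)=\left(\frac{\sin(\epsilon(z-\lambda_{n})/2)}{\epsilon(z-\lambda_{n})/2}\right)^{2}$, normalized so that $m_{n}(\lambda_{n})=1$ and $m_{n}$ decays along $\mathbb{R}$. Setting $g_{n}:=f_{n}m_{n}$, the exponential types add and $m_{n}$ is bounded on $\mathbb{R}$, so $g_{n}\in PW_{\tau+\epsilon}^{p}$; since $f_{n}(\lambda_{k})=0$ for $k\neq n$, biorthogonality is preserved, $g_{n}(\lambda_{k})=\delta_{nk}$. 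The candidate interpolant is then
\[
f=\sum_{n}a_{n}\,\left\Vert k_{\lambda_{n}}\right\Vert _{PW_{\tau+\epsilon}^{q}}\,g_{n},
\]
which, if convergent, satisfies (\ref{eq:def int PW}).

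The core of the argument is the estimate $\left\Vert f\right\Vert _{p}\lesssim\left\Vert (a_{n})\right\Vert _{l^{p}}$. Here I would pass to Hardy spaces: multiplying by $e^{\pm i(\tau+\epsilon)z}$ and invoking the Plancherel--Polya inequality (\ref{eq:plancherel polya}) reduces the bound to the boundedness of a synthesis operator built from the normalized reproducing kernels of $H^{q}(\mathbb{C}_{a}^{\pm})$, equivalently to the Carleson-measure embedding $\int|u|^{p}\,d\sigma_{\Lambda}\lesssim\left\Vert u\right\Vert _{p}^{p}$. I would split $\Lambda$ according to the sign of $\mathrm{Im}\,\lambda_{n}$ and treat the two half-plane families separately. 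By hypothesis, for every $a$ the trace $\Lambda\cap\mathbb{C}_{a}^{\pm}$ satisfies the Carleson condition (\ref{Carleson}), so by Shapiro--Shields it interpolates in $H^{p}(\mathbb{C}_{a}^{\pm})$ and its normalized reproducing kernels form an unconditional basic sequence in $H^{q}(\mathbb{C}_{a}^{\pm})$; this is precisely the boundedness required, and it is what \emph{compensates for the absence of uniform minimality}. The quantifier ``for every $a$'' is used because $\Lambda$ need not lie in a strip, so its points occur at every height and the Carleson-measure estimate must be available on the line through each $\lambda_{n}$; the uniform separation that Carleson also yields is what controls the factors $m_{n}$ uniformly.

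The main obstacle is two-fold. First, because minimality is \emph{not} uniform, the norms $\left\Vert f_{n}\right\Vert _{p}$ are unbounded, so the series defining $f$ cannot be summed termwise; convergence and the $l^{p}\to L^{p}$ bound must come entirely from the Carleson-measure / unconditional-basis structure above, once the $\epsilon$-multiplier has localized each $g_{n}$ near $\lambda_{n}$. Second, the two half-plane constructions must be recombined into a single entire function of type exactly $\tau+\epsilon$: the upper and lower Hardy interpolants are analytic only on their respective half-planes, and it is the extra $\epsilon$ of type---producing genuine decay of $e^{\mp i(\tau+\epsilon)z}$ off the real axis---that supplies the room to glue them, while minimality guarantees that the biorthogonal functions also vanish at the points of the opposite half-plane, which the separate Hardy interpolations cannot by themselves ensure. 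Finally I would verify boundedness of the restriction map $f\mapsto(f(\lambda_{n})/\left\Vert k_{\lambda_{n}}\right\Vert )_{n}$ (again a Carleson embedding), so that together with the constructed right inverse, surjectivity onto $l^{p}$ follows.
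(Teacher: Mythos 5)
Your construction of the candidate interpolant is essentially the paper's: minimality furnishes biorthogonals $f_{n}$, and multiplication by a fixed type-$\epsilon$ factor normalized at $\lambda_{n}$ (your squared sinc plays exactly the role of the paper's $H_{\epsilon}=c\mathcal{F}\phi_{\epsilon}$) produces a finite sum $f=\sum_{n}a_{n}f_{n}(\cdot)\,H_{\epsilon}(\cdot-\lambda_{n})\in PW_{\tau+\epsilon}^{p}$ with the right values. But your declared core step --- proving $\left\Vert f\right\Vert _{p}\lesssim\left\Vert (a_{n})\right\Vert _{l^{p}}$ for \emph{this explicit} $f$ --- is not merely unproved; it is false in general, and no Carleson-embedding or unconditional-basis argument can repair it. Take $a=e_{1}$: then $f=\omega_{1}f_{1}m_{1}$, and minimality gives no quantitative control whatsoever on $f_{1}$. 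If $\Lambda$ is not complete, replace $f_{1}$ by $f_{1}+cF$ with $F\in PW_{\tau}^{p}$ vanishing on $\Lambda$ and $c$ arbitrarily large: the family is still biorthogonal, yet $\left\Vert f_{1}m_{1}\right\Vert _{p}$ blows up, so the candidate's norm admits no bound by $\left\Vert a\right\Vert _{l^{p}}=1$. The structure you invoke (Shapiro--Shields unconditionality of \emph{normalized reproducing kernels} under the Carleson condition) says nothing about sums of the functions $g_{n}=f_{n}m_{n}$, which are not reproducing kernels and whose norms and mass distribution are uncontrolled; the multiplier $m_{n}$ cannot ``localize'' away mass that $f_{n}$ already carries near $\lambda_{n}$, where $m_{n}\approx1$. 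You half-identify the obstacle (``the series cannot be summed termwise'') but the resolution you propose does not exist.

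The idea you are missing is the crux of the paper's proof: one does not bound $\left\Vert f\right\Vert _{p}$ but the \emph{distance} from $f$ to the subspace $\left\{ g\in PW_{\tau+\epsilon}^{p}:\; g|\Lambda=0\right\} $, since $f-g$ is an equally good interpolant for any such $g$. This distance is estimated by Shapiro--Shields duality: after splitting $\Lambda$ into upper and lower parts, shifting to a line $\mathbb{R}-i\eta$ avoiding $\Lambda$, and dividing out the Blaschke product $B_{-\eta}$, one pairs against $h\in H^{q}(\mathbb{C}_{-\eta}^{+})$ and Cauchy's formula collapses the pairing to $\sum_{n}a_{n}G_{\lambda_{n},\epsilon}^{0}(\lambda_{n})h(\lambda_{n})$, where $G_{\lambda_{n},\epsilon}^{0}=(z-\lambda_{n})H_{\epsilon}(z-\lambda_{n})f_{n}(z)e^{i(\tau+\epsilon)z}/B_{-\eta}$. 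At that point $f_{n}$ enters \emph{only} through $f_{n}(\lambda_{n})=1$: the unbounded norms $\left\Vert f_{n}\right\Vert _{p}$ have disappeared from the estimate, which is precisely how non-uniform minimality is compensated. The Carleson hypothesis is then used twice: once to bound the Blaschke factor from below at $\lambda_{n}$, giving $\left|G_{\lambda_{n},\epsilon}^{0}(\lambda_{n})\right|\asymp\left(\text{Im}(\lambda_{n})+\eta\right)e^{-(\tau+\epsilon)\text{Im}(\lambda_{n})}$, and once, after H\"older, as the Carleson embedding $\sum_{n}\left(\text{Im}(\lambda_{n})+\eta\right)\left|h(\lambda_{n}+i\eta)\right|^{q}\lesssim\left\Vert h\right\Vert _{q}^{q}$. (Incidentally, your worry about ``gluing'' upper and lower half-plane interpolants is misplaced: both partial sums $f^{\pm}$ are already entire; the half-plane splitting is only a device for the dual estimates. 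Likewise, boundedness of the restriction map is not needed for the definition of $\text{Int}(PW_{\tau+\epsilon}^{p})$ used here.) Without the duality reduction your argument cannot close, so as written there is a genuine gap.
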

\begin{proof}
Using an idea of Beurling, let $\epsilon>0$ be fixed and $\phi_{\epsilon}\in\mathcal{C}^{\infty}$,
with compact support contained in $\left(-\frac{\epsilon}{2},\frac{\epsilon}{2}\right)$,
be such that $H_{\epsilon}:=c\mathcal{F}\phi_{\epsilon}$ satisfies
$H_{\epsilon}(0)=1$. In particular, the Paley-Wiener theorem implies
that $H_{\epsilon}$ is an entire function of exponential type $\epsilon$.
Moreover, since $\phi_{\epsilon}$ belongs to the Schwarz class (and
in particular is $\mathcal{C}^{1}$), we have t\[
\left|H_{\epsilon}\left(x\right)\right|\lesssim\frac{1}{1+|x|},\quad x\in\mathbb{R}.\]
Now, from a a Phragmen-Lindelf principle (see e.g. \cite[p.39]{Le96}),
we can deduce that \begin{equation}
\left|H_{\epsilon}\left(z\right)\right|\lesssim\frac{e^{\epsilon\left|\text{Im}\left(z\right)\right|}}{1+\left|z\right|},\quad z\in\mathbb{C}.\label{majoration Hepsilon}\end{equation}

On the other hand, since $\Lambda$ is minimal in $PW_{\tau}^{p}$,
there exists a sequence of functions $\left(f_{\lambda}\right)_{\lambda\in\Lambda}\subset PW_{\tau}^{p}$
such that $f_{n}(\lambda_{k})=\delta_{nk}$. Let $a=\left(a_{n}\right)_{n\geq1}$
be a finitely supported sequence and $f$ be the solution of the interpolation
problem \[
f(\lambda_{n})=a_{n},\quad n\geq1,\]
defined by\[
f\left(z\right)=\sum_{n\geq1}a_{n}f_{n}\left(z\right)H_{\epsilon}\left(z-\lambda_{n}\right).\]
(Notice that $f$ is a finite sum of functions belonging to $PW_{\tau+\epsilon}^{p}$.)
From (\ref{eq:def int PW}), it suffices to bound the quantity\[
\inf\left\{ \left\Vert f-g\right\Vert _{p}:\; g\in PW_{\tau+\epsilon}^{p},\; g(\lambda)=0,\;\lambda\in\Lambda\right\} \]
by a constant times the following norm of $a$ \[
\left\Vert a\right\Vert :=\left(\sum_{\lambda\in\Lambda}\left|a_{\lambda}\right|^{p}\left(1+\left|\text{Im}\left(\lambda\right)\right|\right)e^{-p\left(\tau+\epsilon\right)\left|\text{Im}\left(\lambda\right)\right|}\right)^{\frac{1}{p}}.\]
We split the above sum in two parts: $f^{+}$ and $f^{-}$ corresponding
respectively to $\Lambda_{0}^{+}:=\Lambda\cap\left(\mathbb{C}^{+}\cup\mathbb{R}\right)$
and $\Lambda\cap\mathbb{C}^{-}$, and estimate each part separately.
Here, we will only estimate the first one, the method is the same
for the second one. Let $\eta>0$ be such that $\left\{ \text{Im}\left(z\right)=-\eta\right\} \cap\Lambda=\emptyset$.
The Plancherel-Polya inequality allows us to estimate the norm of
$f^{+}-g^{+}$, $g^{+}\in PW_{\tau+\epsilon}^{p}$ and $g^{+}|\Lambda=0$,
on the axis $\left\{ \text{Im}\left(z\right)=-\eta\right\} $. We
consider the Blaschke product associated to $\Lambda_{-\eta}^{+}$,
in the half-plane $\mathbb{C}_{-\eta}^{+}$\[
B_{-\eta}\left(z\right)=\prod_{\lambda_{n}\in\Lambda_{-\eta}^{+}}c_{\lambda_{n}}\frac{z-\lambda_{n}}{z-\overline{\lambda}_{n}-2i\eta},\quad z\in\mathbb{C}_{-\eta}^{+},\]
with suitable unimodular coefficients $c_{\lambda_{n}}$. For $\lambda_{n}\in\Lambda_{0}^{+}$,
we consider the function\[
G_{\lambda_{n},\epsilon}:z\mapsto\left(z-\lambda_{n}\right)H_{\epsilon}\left(z-\lambda_{n}\right)f_{n}\left(z\right)e^{i\left(\tau+\epsilon\right)z}\]
which belongs to $H^{\infty}\left(\mathbb{C}_{-\eta}^{+}\right)$
(this follows from (\ref{eq:plancherel polya}), (\ref{pointwise est. PW})
and (\ref{majoration Hepsilon})) and vanishes on $\Lambda_{-\eta}^{+}$
(it actually vanishes on $\Lambda$). We recall that $\Lambda_{-\eta}^{+}$
satisfies the Carleson condition in $\mathbb{C}_{-\eta}^{+}$. Also,
the function $G_{\lambda_{n},\epsilon}^{0}:=G_{\lambda_{n},\epsilon}/B_{-\eta}$
belongs to $H^{\infty}\left(\mathbb{C}_{-\eta}^{+}\right)$. Let $B^{-}$
be the Blaschke product associated to $\Lambda_{-\eta}^{-}$ (in $\mathbb{C}_{-\eta}^{-}$).
Observe that\[
\inf\left\{ \left\Vert f^{+}-g^{+}\right\Vert _{p}:\; g^{+}\in PW_{\tau+\epsilon}^{p},\; g^{+}|\Lambda=0\right\} \]
\[
=\inf\left\{ \left\Vert \sum_{\lambda_{n}\in\Lambda^{+}}a_{n}\frac{G_{\lambda_{n},\epsilon}^{0}}{z-\lambda_{n}}-g_{0}^{+}\right\Vert _{p}:\; g_{0}^{+}\in Y\right\} \]
with\[
g_{0}^{+}=\frac{g^{+}}{B_{-\eta}}e^{i\left(\tau+\epsilon\right)\cdot}\]
and \[
Y:=H_{+}^{p}\cap\overline{B_{-\eta}}\left(K_{I^{\tau+\epsilon}}^{p}\cap I^{\tau+\epsilon}B^{-}H_{-}^{p}\right)\subset L^{p}\left(\mathbb{R}\right).\]
By duality arguments inspired by Shapiro and Shields (see \cite[p. 516]{SS61}
and \cite{Gau11} where we consider the bilinear form $\left(f,g\right):=\int_{\mathbb{R}-i\eta}fg$,
for $f,g\in L^{p}\left(\mathbb{R}-i\eta\right)$), and because\[
Y^{\bot_{\left(\cdot,\cdot\right)}}=H_{+}^{q}+Z\]
where $Z$ is such that, for every $h\in Z$, we have \[
\int_{\mathbb{R}-i\eta}\left(\sum_{\lambda_{n}\in\Lambda^{+}}a_{n}\frac{G_{\lambda_{n},\epsilon}^{0}}{z-\lambda_{n}}\right)hdm=0,\]
it is enough to estimate\[
\sup\left\{ N(h):\; h\in H^{q}\left(\mathbb{C}_{-\eta}^{+}\right),\left\Vert h\right\Vert =1\right\} ,\]
where \begin{eqnarray*}
N\left(h\right) & := & \left|\int_{\mathbb{R}-i\eta}\left(\sum_{\lambda_{n}\in\Lambda_{0}^{+}}a_{n}\frac{G_{\lambda_{n},\epsilon}^{0}}{z-\lambda_{n}}\right)hdm\right|.\\
 & \:= & \left|\sum_{\lambda_{n}\in\Lambda_{0}^{+}}a_{n}\int_{\mathbb{R}}\frac{G_{\lambda_{n},\epsilon}^{0}\left(x-i\eta\right)h\left(x-i\eta\right)}{x-\left(\lambda_{n}+i\eta\right)}dx\right|.\end{eqnarray*}
Now, $z\mapsto G_{\lambda_{n},\epsilon}^{0}\left(z-i\eta\right)h\left(z-i\eta\right)$
is a function in $H_{+}^{q}$ and the Cauchy formula gives\[
N(h)=\left|\sum_{\lambda_{n}\in\Lambda_{0}^{+}}a_{n}G_{\lambda_{n},\epsilon}^{0}\left(\lambda_{n}+i\eta-i\eta\right)h\left(\lambda_{n}+i\eta-i\eta\right)\right|.\]
Moreover, since $\Lambda_{0}^{+}$ satisfies the Carleson condition
in $\mathbb{C}_{-\eta}^{+}$ , we have $\left|\frac{B_{-\eta}}{b_{\lambda_{n}}}\left(\lambda_{n}\right)\right|\asymp1$
and since $f_{\lambda_{n}}\left(\lambda_{n}\right)H_{\epsilon}\left(0\right)=1$,
we can estimate \[
\left|G_{\lambda_{n},\epsilon}^{0}\left(\lambda_{n}\right)\right|\asymp\left(\text{Im}\left(\lambda_{n}\right)+\eta\right)e^{-\left(\tau+\epsilon\right)\text{Im}\left(\lambda_{n}\right)},\quad\lambda_{n}\in\Lambda_{0}^{+}.\]
It follows from the triangle inequality and Hlder's inequality that\begin{eqnarray*}
N(h) & \lesssim & \left(\sum_{\lambda_{n}\in\Lambda_{0}^{+}}\left|a_{n}\right|^{p}\left(1+\text{Im}(\lambda_{n})\right)e^{-p\left(\tau+\epsilon\right)\text{Im}(\lambda_{n})}\right)^{\frac{1}{p}}\\
 &  & \times\left(\sum_{\lambda_{n}\in\Lambda_{0}^{+}}\text{Im}\left(\lambda_{n}+i\eta\right)\left|\tilde{h}\left(\lambda_{n}+i\eta\right)\right|^{q}\right)^{\frac{1}{q}},\end{eqnarray*}
where $\tilde{h}=h\left(\cdot-i\eta\right)\in H_{+}^{q}$ . Now, the
Carleson condition satisfied by $\Lambda_{0}^{+}+i\eta$ in $\mathbb{C}^{+}$
gives \[
\left(\sum_{\lambda_{n}\in\Lambda_{0}^{+}}\text{Im}\left(\lambda_{n}+i\eta\right)\left|\tilde{h}\left(\lambda_{n}+i\eta\right)\right|^{q}\right)^{\frac{1}{q}}\lesssim\left\Vert h\right\Vert =1.\]
See (\ref{eq:Carleson measure}) and properties mentioned thereafter.
Finally, we obtain\[
\inf\left\{ \left\Vert f^{+}-g\right\Vert :\; g\in PW_{\tau+\epsilon}^{p},\; g|\Lambda=0\right\} \]
\[
\qquad\qquad\lesssim\left(\sum_{\lambda_{n}\in\Lambda_{0}^{+}}\left|a_{n}\right|^{p}\left(1+\text{Im}(\lambda_{n})\right)e^{-p\left(\tau+\epsilon\right)\text{Im}(\lambda_{n})}\right)^{\frac{1}{p}}\]
which is the required estimate and ends the proof.
\end{proof}
To conclude this section, we give two immediate corollaries to our
main theorem. First, since, by Proposition \ref{w Int implies Carleson},
 a weak interpolating sequence in $PW_{\tau}^{p}$ has to satisfy
the Carleson condition in every half-plane $\mathbb{C}_{a}^{\pm}$,
we can deduce the following result.
\begin{cor}
If $\Lambda\in\text{Int}_{w}\left(PW_{\tau}^{p}\right)$, then, for
every $\epsilon>0$, $\Lambda$ is interpolating in $PW_{\tau+\epsilon}^{p}$.
\end{cor}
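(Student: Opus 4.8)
The plan is to show that the hypothesis $\Lambda\in\text{Int}_{w}\left(PW_{\tau}^{p}\right)$ forces $\Lambda$ to satisfy both requirements of Theorem \ref{thm: MAIN RESULT}---minimality in $PW_{\tau}^{p}$ together with the Carleson condition in each half-plane---and then simply to invoke that theorem. Thus the entire content of the corollary is bookkeeping: verifying that the two hypotheses of the main result are consequences of weak interpolation.

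First I would extract minimality. By the definition (\ref{Int w}) of weak interpolation, there are functions $\left(f_{n}\right)_{n\geq1}\subset PW_{\tau}^{p}$ with $f_{n}\left(\lambda_{k}\right)=\delta_{nk}\left\Vert k_{\lambda_{n}}\right\Vert _{PW_{\tau}^{q}}$ and $\sup_{n}\left\Vert f_{n}\right\Vert _{p}<\infty$. The norm estimate recalled above shows that $\left\Vert k_{\lambda_{n}}\right\Vert _{PW_{\tau}^{q}}\asymp\left(1+\left|\text{Im}\left(\lambda_{n}\right)\right|\right)^{-1/p}e^{\tau\left|\text{Im}\left(\lambda_{n}\right)\right|}$ is finite and strictly positive, so the rescaled functions $g_{n}:=f_{n}/\left\Vert k_{\lambda_{n}}\right\Vert _{PW_{\tau}^{q}}$ lie in $PW_{\tau}^{p}$ and satisfy $g_{n}\left(\lambda_{k}\right)=\delta_{nk}$; this is exactly minimality of $\Lambda$ in $PW_{\tau}^{p}$. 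The second hypothesis is immediate: Proposition \ref{w Int implies Carleson} asserts precisely that a weak interpolating sequence in $PW_{\tau}^{p}$ has $\Lambda\cap\mathbb{C}_{a}^{\pm}$ satisfying the Carleson condition (\ref{Carleson}) for every $a\in\mathbb{R}$.

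With both hypotheses in hand, Theorem \ref{thm: MAIN RESULT} applies verbatim and yields $\Lambda\in\text{Int}\left(PW_{\tau+\epsilon}^{p}\right)$ for every $\epsilon>0$, completing the argument. There is no genuine obstacle to overcome here---all of the analytic difficulty has already been absorbed into Theorem \ref{thm: MAIN RESULT} and Proposition \ref{w Int implies Carleson}. The only delicate-looking step, passing from weak interpolation to bare minimality, reduces to dividing each interpolating function by the finite nonzero scalar $\left\Vert k_{\lambda_{n}}\right\Vert _{PW_{\tau}^{q}}$, which merely discards the normalisation built into (\ref{Int w}).
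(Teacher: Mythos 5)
Your proof is correct and follows exactly the route the paper intends: the paper presents this corollary as an immediate consequence of Proposition \ref{w Int implies Carleson} (weak interpolation gives the Carleson condition in every half-plane) combined with Theorem \ref{thm: MAIN RESULT}, with the passage from weak interpolation to minimality being the trivial rescaling by $\left\Vert k_{\lambda_{n}}\right\Vert _{PW_{\tau}^{q}}$ that you spell out.
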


We also give a result involving density conditions as a second corollary
to our main result, which does not seem easy to prove directly.
\begin{cor}
Let $\Lambda$ satisfying (\ref{lambda incluse dans bande}) be a
weak interpolating sequence in $PW_{\tau}^{p}$. Then, $\mathcal{D}_{\Lambda}^{+}\leq\frac{\tau}{\pi}$.\end{cor}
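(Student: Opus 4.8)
The plan is to avoid estimating the density directly from weak interpolation, which seems hard (see below), and instead to bootstrap through the interpolation property in a slightly enlarged space, where the necessary direction of Seip's density theorem is available.

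First I would observe that a weak interpolating sequence is automatically minimal: rescaling the functions of (\ref{Int w}) by $\left\Vert k_{\lambda_{n}}\right\Vert_{\left(PW_{\tau}^{p}\right)^{\star}}^{-1}$ produces a biorthogonal family, so $\Lambda$ is minimal in $PW_{\tau}^{p}$. By Proposition \ref{w Int implies Carleson}, the trace $\Lambda\cap\mathbb{C}_{a}^{\pm}$ satisfies the Carleson condition (\ref{Carleson}) for every $a\in\mathbb{R}$. These are precisely the hypotheses of Theorem \ref{thm: MAIN RESULT}; applying it (this is exactly the content of the preceding corollary) gives $\Lambda\in\text{Int}\left(PW_{\tau+\epsilon}^{p}\right)$ for every $\epsilon>0$.

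Next, because $\Lambda$ satisfies (\ref{lambda incluse dans bande}) and so lies in a strip of finite width, I would invoke the necessary direction of Theorem \ref{thm:(densite et interpolation)} with $\tau+\epsilon$ in place of $\tau$: the membership $\Lambda\in\text{Int}\left(PW_{\tau+\epsilon}^{p}\right)$ forces $\mathcal{D}_{\Lambda}^{+}\leq\frac{\tau+\epsilon}{\pi}$. Since $\epsilon>0$ is arbitrary, letting $\epsilon\to0^{+}$ yields $\mathcal{D}_{\Lambda}^{+}\leq\frac{\tau}{\pi}$, which is the assertion.

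The only genuine difficulty here is conceptual rather than computational. The value $\frac{\tau}{\pi}$ is the critical density for \emph{full} interpolation, whereas weak interpolation is strictly weaker --- the example of the introduction is weak interpolating in $PW_{\pi}^{p}$ yet fails the Lyubarskii--Seip conditions --- so there is no density characterization one can apply to $PW_{\tau}^{p}$ directly. The main theorem is exactly what converts the weaker data (minimality together with the half-plane Carleson conditions) into genuine interpolation in $PW_{\tau+\epsilon}^{p}$, after which Seip's converse and the limit $\epsilon\to0$ close the argument.
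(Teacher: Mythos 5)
Your proof is correct and follows essentially the same route as the paper: apply Theorem \ref{thm: MAIN RESULT} (via Proposition \ref{w Int implies Carleson} and the minimality that weak interpolation provides) to get $\Lambda\in\text{Int}\left(PW_{\tau+\epsilon}^{p}\right)$, then use the necessary direction of Theorem \ref{thm:(densite et interpolation)} and let $\epsilon\to0^{+}$. Your explicit verification of the hypotheses of Theorem \ref{thm: MAIN RESULT} is a welcome addition of detail, but it is not a different argument.
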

\begin{proof}
It follows from Theorem \ref{thm: MAIN RESULT} that $\Lambda$ is
interpolating in $PW_{\tau+\epsilon}^{p}$, for every $\epsilon>0$.
Thus, Theorem \ref{thm:(densite et interpolation)} implies that $\mathcal{D}_{\Lambda}^{+}\leq\frac{\tau+\epsilon}{\pi}$,
for every $\epsilon>0$, thus $\mathcal{D}_{\Lambda}^{+}\leq\frac{\tau}{\pi}$.
\end{proof}

\section{Weighted Interpolation and McPhail's Condition}

The previous technics can be used to show a more general result. We
need to introduce some more definitions. Let $X$ be the Hardy space
$H^{p}\left(\mathbb{C}_{a}^{\pm}\right)$ or the Paley-Wiener space
$PW_{\tau}^{p}$, $\Lambda=\left\{ \lambda_{n}\right\} _{n\geq1}$
a sequence of complex numbers lying in the corresponding domain $\mathbb{C}_{a}^{\pm}$
or $\mathbb{C}$ and $\omega=\left(\omega_{n}\right)_{n\geq1}$ a
sequence of strictly positive numbers. We say that $\Lambda$ is $\omega-$\emph{interpolating}
in $X$ if for every $\left(a_{n}\right)_{n\geq1}\in l^{p}$, there
is $f\in X$ such that\begin{equation}
\omega_{n}f\left(\lambda_{n}\right)=a_{n},\qquad n\geq1.\label{def: weighted interpolation}\end{equation}
The reader has noticed that the previous definition of interpolation
in $X$ is equivalent to $\omega-$interpolation in $X$, with \[
\omega_{n}=\left\Vert k_{\lambda_{n}}\right\Vert _{X^{\star}}^{-1},\qquad n\geq1.\]
Let $\Lambda\subset\mathbb{C}_{a}^{\pm}$. In this section, the sequence
$\Lambda$ is \emph{a priori} not a Carleson sequence. We only assume
the Blaschke condition\begin{equation}
\sum_{n\geq1}\frac{\left|\text{Im}\left(\lambda_{n}\right)-a\right|}{1+\left|\lambda_{n}\right|^{2}}<\infty.\label{eq:Blaschke condition}\end{equation}
We set\[
\vartheta_{n}:=\prod_{k\neq n}\left|\frac{\lambda_{n}-\lambda_{k}}{\lambda_{n}-\overline{\lambda}_{k}-2ia}\right|,\qquad n\geq1.\]
The couple $\left(\Lambda,\omega\right)$ is said to satisfy the \emph{McPhail
condition} $\left(M_{q}\right)$, denoted $\left(\Lambda,\omega\right)\in\left(M_{q}\right)$,
if the measure\begin{equation}
\nu_{\Lambda,\omega}:=\sum_{n\geq1}\frac{\left|\text{Im}\left(\lambda_{n}\right)-a\right|^{q}}{\omega_{n}^{q}\vartheta_{n}^{q}}\delta_{\lambda_{n}}\label{eq:mc phail measure def}\end{equation}
is a Carleson measure in $\mathbb{C}_{a}^{\pm}$. The following theorem
is a special case of McPhail's theorem (\cite{McP90}) and is stated
as follows in \cite{JP06}.
\begin{thm}
\label{thm:(McPhail)}(McPhail)

Let $1<p<\infty$, $\Lambda\subset\mathbb{C}_{a}^{\pm}$ a sequence
satisfying the Blaschke condition (\ref{eq:Blaschke condition}) and
$\omega=\left(\omega_{n}\right)_{n\geq1}$ be a sequence of positive
numbers. The following assertions are equivalents.

$(i)$ $\Lambda$ is $\omega-$interpolating in $H^{p}\left(\mathbb{C}_{a}^{\pm}\right)$;

$(ii)$ $\left(\Lambda,\omega\right)$ satisfies the McPhail condition
$\left(M_{q}\right)$, $\frac{1}{p}+\frac{1}{q}=1$.\end{thm}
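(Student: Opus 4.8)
The plan is to prove the two implications separately, reducing first to the upper half-plane with $a=0$ (the general case then follows by the translation $z\mapsto z-ia$ and, for $\mathbb{C}_a^-$, reflection). Throughout I would use the Blaschke product $B$ with zero set $\Lambda$, which converges by the Blaschke condition (\ref{eq:Blaschke condition}), together with the elementary identity $\vartheta_n=2\,\text{Im}(\lambda_n)\,|B'(\lambda_n)|$ coming from $B'(\lambda_n)=b'_{\lambda_n}(\lambda_n)\prod_{k\neq n}b_{\lambda_k}(\lambda_n)$ and $b'_{\lambda_n}(\lambda_n)=(2i\,\text{Im}\lambda_n)^{-1}$. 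I would also record, via Carleson's theorem as recalled after (\ref{eq:Carleson measure}), that $(M_q)$ is equivalent to the embedding inequality
\[
\int|g|^q\,d\nu_{\Lambda,\omega}=\sum_{n\geq1}\frac{(\text{Im}\lambda_n)^q}{\omega_n^q\vartheta_n^q}|g(\lambda_n)|^q\lesssim\|g\|_q^q,\qquad g\in H^q\left(\mathbb{C}^+\right).
\]
The whole point of McPhail's condition is that it lets this weighted measure play the role that the Carleson condition on $\Lambda$ itself plays in the unweighted Shapiro--Shields theory.

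For the implication $(ii)\Rightarrow(i)$ I would argue exactly as in the proof of Theorem \ref{thm: MAIN RESULT}. Given finitely supported data $(a_n)\in l^p$, set
\[
f(z)=B(z)\sum_n\frac{a_n}{\omega_n B'(\lambda_n)(z-\lambda_n)},
\]
so that $\omega_n f(\lambda_n)=a_n$ for every $n$. To bound $\|f\|_p$ I would dualize against $g\in H^q$ with $\|g\|_q=1$ and use the Cauchy/residue computation (as in the $N(h)$ estimate of the main proof) to reduce the pairing to $\sum_n\frac{a_n}{\omega_n B'(\lambda_n)}g(\lambda_n)$. Substituting $|B'(\lambda_n)|^{-1}=2\,\text{Im}(\lambda_n)/\vartheta_n$ and applying H\"older's inequality splits this as $\|(a_n)\|_{l^p}$ times $\bigl(\sum_n(\text{Im}\lambda_n)^q\omega_n^{-q}\vartheta_n^{-q}|g(\lambda_n)|^q\bigr)^{1/q}$, and the second factor is $\lesssim\|g\|_q=1$ precisely by the embedding form of $(M_q)$. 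Hence $\|f\|_p\lesssim\|(a_n)\|_{l^p}$, and a density argument removes the finite-support restriction.

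The implication $(i)\Rightarrow(ii)$ is the substantial one. First I would use the open mapping (closed graph) theorem to upgrade $\omega$-interpolation into a \emph{bounded} solution operator $L:l^p\to H^p$ with $\omega_n(La)(\lambda_n)=a_n$ and $\|La\|_p\lesssim\|a\|_{l^p}$. Passing to adjoints and invoking the closed range theorem, this is equivalent to the lower bound $\|\sum_n c_n\omega_n k_{\lambda_n}\|_{H^q}\gtrsim\|(c_n)\|_{l^q}$ on finite combinations of reproducing kernels. The remaining and genuinely hard step is to convert this lower Riesz--Fischer bound carrying the weights $\omega_n$ into the \emph{upper} Carleson-measure bound $(M_q)$, which carries the different weights $\text{Im}(\lambda_n)/(\omega_n\vartheta_n)$. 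Here I would exploit the biorthogonality between $k_{\lambda_n}$ and the functions $B/(B'(\lambda_n)(\cdot-\lambda_n))$ supplied by the Blaschke factorization---the very functions used in the sufficiency construction---to transfer the estimate, testing the lower bound against the extremal sequence dual to $(|g(\lambda_n)|)$ for an arbitrary $g\in H^q$.

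I expect this last translation to be the main obstacle, and it is the core of McPhail's theorem: because $\Lambda$ is \emph{not} assumed to be separated or Carleson, the factors $\vartheta_n$ may be arbitrarily small, so none of the elementary single-kernel estimates survive and the weighted measure $\nu_{\Lambda,\omega}$ must absorb all the irregularity of $\Lambda$. Controlling the convergence of the series $\sum_n c_n/(\cdot-\lambda_n)$ and justifying the residue/duality pairing in the absence of uniform separation is exactly where the Muckenhoupt-type machinery underlying McPhail's original argument enters; for this reason I would, as the authors do, ultimately appeal to \cite{McP90} (in the form stated in \cite{JP06}) rather than reprove it from scratch.
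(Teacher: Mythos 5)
Be aware that the paper does not prove this statement at all: Theorem \ref{thm:(McPhail)} is imported as a known result (``a special case of McPhail's theorem (\cite{McP90})\dots stated as follows in \cite{JP06}''), so the citation you fall back on for the hard direction \emph{is} the paper's entire treatment, and your proposal in fact does strictly more than the paper. Your argument for $(ii)\Rightarrow(i)$ is correct and is exactly the Hardy-space version of the duality technique the paper uses to prove Theorems \ref{thm: MAIN RESULT} and \ref{thm:WEIGHTED VERSION}: the identity $\vartheta_n=2\,\text{Im}(\lambda_n)\,\left|B'(\lambda_n)\right|$ is right, and H\"older plus the embedding form of $(M_q)$ closes the estimate. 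One precision is needed: pairing $f$ itself against $g\in H^{q}\left(\mathbb{C}^{+}\right)$ under $\int fg\,dm$ gives $0$ (each of $H^{p}\left(\mathbb{C}^{+}\right)$, $H^{q}\left(\mathbb{C}^{+}\right)$ annihilates the other); what you must pair is $S=f\overline{B}=\sum_{n}a_{n}\omega_{n}^{-1}B'(\lambda_{n})^{-1}(\cdot-\lambda_{n})^{-1}$, which has the same $L^{p}(\mathbb{R})$ norm as $f$ since $\left|B\right|=1$ a.e.\ on $\mathbb{R}$ --- this is precisely what the paper does when it divides $G_{\lambda_{n},\epsilon}$ by the Blaschke product to form $G_{\lambda_{n},\epsilon}^{0}$ before applying Cauchy's formula. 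Finally, the direction $(i)\Rightarrow(ii)$ is closer than you think, and your closed-range/biorthogonal detour can be skipped: the same pairing satisfies $\int_{\mathbb{R}}Sg\,dm=\int_{\mathbb{R}}\overline{B}fg\,dm$ for \emph{any} $H^{p}$-solution $f$ of the interpolation problem, because $S-\overline{B}f$ is the boundary trace of $(BS-f)/B$, which lies in $H^{p}\left(\mathbb{C}^{+}\right)$ by Smirnov's theorem and hence annihilates $H^{q}\left(\mathbb{C}^{+}\right)$. Taking a solution $f$ with $\left\Vert f\right\Vert _{p}\lesssim\left\Vert a\right\Vert _{l^{p}}$ (open mapping theorem), choosing unimodular constants to align phases, and letting $a$ range over finitely supported unit vectors of $l^{p}$ gives $\bigl(\sum_{n}\left(\text{Im}\,\lambda_{n}\right)^{q}\omega_{n}^{-q}\vartheta_{n}^{-q}\left|g(\lambda_{n})\right|^{q}\bigr)^{1/q}\lesssim\left\Vert g\right\Vert _{q}$ for every $g\in H^{q}\left(\mathbb{C}^{+}\right)$, which is $(M_q)$ by the converse direction of the Carleson embedding equivalence $(2)\Leftrightarrow(3)$ recalled in the introduction; so a self-contained proof of both directions is available along your lines, although citing \cite{McP90}, as the paper itself does, is equally legitimate.
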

\begin{rem}
\label{Rem: weighted int PW entraine MP}It follows directly from
McPhail's Theorem and the Plancherel-Polya inequality that
if $\Lambda\subset\mathbb{C}$ is $\omega-$interpolating in $PW_{\tau}^{p}$,
then for every $a\in\mathbb{R}$, we necessarily have\[
\left(\left(\Lambda\cap\mathbb{C}_{a}^{\pm}\right),e^{\pm\tau\left|\text{Im}\left(\lambda\right)\right|}\omega\right)\in\left(M_{q}\right).\]

\end{rem}
The following result is a weighted version of Theorem \ref{thm: MAIN RESULT}.
We will only sketch the proof which is analogous to that of our main
result.
\begin{thm}
\label{thm:WEIGHTED VERSION}Let $\tau>0$, $1<p<\infty$, $\omega=\left(\omega_{n}\right)_{n\geq1}$
a sequence of strictly positive numbers and $\Lambda$ be a minimal
sequence in $PW_{\tau}^{p}$ such that for every $a\in\mathbb{R}$,
\[
\left(\left(\Lambda\cap\mathbb{C}_{a}^{\pm}\right),e^{\pm\tau\left|\text{Im}\left(\lambda\right)\right|}\omega\right)\in\left(M_{q}\right).\]
Then, for every $\epsilon>0$, $\Lambda$ is $\omega-$interpolating
in $PW_{\tau+\epsilon}^{p}$.\end{thm}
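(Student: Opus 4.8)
The plan is to run the argument of Theorem~\ref{thm: MAIN RESULT} essentially line by line, the single structural modification being that the role played there by the Carleson condition (through the normalization $|(B_{-\eta}/b_{\lambda_n})(\lambda_n)|\asymp1$ and Carleson's embedding) is now played by McPhail's condition $(M_q)$. Fix $\epsilon>0$ and keep the Beurling multiplier $H_{\epsilon}=c\mathcal{F}\phi_{\epsilon}$ of exponential type $\epsilon$ with $H_{\epsilon}(0)=1$ and the decay (\ref{majoration Hepsilon}). Minimality of $\Lambda$ in $PW_{\tau}^{p}$ furnishes $(f_{n})\subset PW_{\tau}^{p}$ with $f_{n}(\lambda_{k})=\delta_{nk}$. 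Given a finitely supported $a=(a_{n})\in l^{p}$, I set
\[
f(z)=\sum_{n}\frac{a_{n}}{\omega_{n}}\,f_{n}(z)\,H_{\epsilon}(z-\lambda_{n}),
\]
a finite sum of elements of $PW_{\tau+\epsilon}^{p}$ satisfying $\omega_{n}f(\lambda_{n})=a_{n}$ because $H_{\epsilon}(0)=1$. Since the point evaluations are continuous on $PW_{\tau+\epsilon}^{p}$ by (\ref{pointwise est. PW}) and finitely supported sequences are dense in $l^{p}$, it suffices to produce the uniform bound
\[
\inf\bigl\{\|f-g\|_{p}:\ g\in PW_{\tau+\epsilon}^{p},\ g|\Lambda=0\bigr\}\lesssim\Bigl(\sum_{n}|a_{n}|^{p}\Bigr)^{1/p},
\]
for then subtracting a near-optimal $g$ (which does not alter the values on $\Lambda$) yields a bounded $\omega$-interpolation operator. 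This hypothesis is, via Remark~\ref{Rem: weighted int PW entraine MP}, precisely the condition forced on $\Lambda$ by $\omega$-interpolation in $PW_{\tau}^{p}$, so the statement is the exact weighted analogue of the main result.

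I would split $f=f^{+}+f^{-}$ along $\Lambda_{0}^{+}=\Lambda\cap(\mathbb{C}^{+}\cup\mathbb{R})$ and $\Lambda\cap\mathbb{C}^{-}$ and estimate only $f^{+}$, the other part being symmetric. Choosing $\eta>0$ with $\{\text{Im}(z)=-\eta\}\cap\Lambda=\emptyset$, I form the Blaschke product $B_{-\eta}$ of $\Lambda_{-\eta}^{+}$ in $\mathbb{C}_{-\eta}^{+}$ and the functions
\[
G_{\lambda_{n},\epsilon}(z)=(z-\lambda_{n})H_{\epsilon}(z-\lambda_{n})f_{n}(z)e^{i(\tau+\epsilon)z}\in H^{\infty}(\mathbb{C}_{-\eta}^{+}),
\]
which vanish on $\Lambda$. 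Because $B_{-\eta}$ is a Blaschke product whose zeros are zeros of $G_{\lambda_{n},\epsilon}$, the quotient $G_{\lambda_{n},\epsilon}^{0}=G_{\lambda_{n},\epsilon}/B_{-\eta}$ stays in $H^{\infty}(\mathbb{C}_{-\eta}^{+})$ with no Carleson hypothesis needed (inner division does not increase the sup-norm). The Shapiro--Shields duality reduction is purely formal and transfers verbatim, so it remains to bound $N(h)$ for $h\in H^{q}(\mathbb{C}_{-\eta}^{+})$, $\|h\|=1$, and Cauchy's formula evaluates it as
\[
N(h)=\Bigl|\sum_{\lambda_{n}\in\Lambda_{0}^{+}}\frac{a_{n}}{\omega_{n}}\,G_{\lambda_{n},\epsilon}^{0}(\lambda_{n})\,h(\lambda_{n})\Bigr|.
\]

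The one genuinely new computation is the value $G_{\lambda_{n},\epsilon}^{0}(\lambda_{n})$. As $\Lambda$ need not be Carleson, the factor $|(B_{-\eta}/b_{\lambda_{n}})(\lambda_{n})|$ is no longer comparable to $1$ but is exactly the McPhail product $\vartheta_{n}$ for the half-plane $\mathbb{C}_{-\eta}^{+}$, which may degenerate. Using $f_{n}(\lambda_{n})H_{\epsilon}(0)=1$ one finds
\[
|G_{\lambda_{n},\epsilon}^{0}(\lambda_{n})|\asymp\frac{(\text{Im}(\lambda_{n})+\eta)\,e^{-(\tau+\epsilon)\text{Im}(\lambda_{n})}}{\vartheta_{n}}.
\]
Hence, by the triangle and Hölder inequalities (pairing $a_{n}$ against $\omega_{n}^{-1}G_{\lambda_{n},\epsilon}^{0}(\lambda_{n})h(\lambda_{n})$), $N(h)$ is dominated by $\bigl(\sum_{n}|a_{n}|^{p}\bigr)^{1/p}$ times
\[
\Bigl(\sum_{\lambda_{n}\in\Lambda_{0}^{+}}\frac{(\text{Im}(\lambda_{n})+\eta)^{q}\,e^{-q(\tau+\epsilon)\text{Im}(\lambda_{n})}}{\omega_{n}^{q}\,\vartheta_{n}^{q}}\,|h(\lambda_{n})|^{q}\Bigr)^{1/q}.
\]
On $\Lambda_{0}^{+}$ one has $\text{Im}(\lambda_{n})\geq0$, so $e^{-q\epsilon\text{Im}(\lambda_{n})}\leq1$ and these weights are bounded by the atoms of the McPhail measure $\nu_{\Lambda\cap\mathbb{C}_{-\eta}^{+},\,e^{\tau|\text{Im}|}\omega}$ of (\ref{eq:mc phail measure def}) taken with $a=-\eta$; the potentially singular factor $\vartheta_{n}^{-q}$ is precisely the one built into that measure. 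By hypothesis this measure is a Carleson measure in $\mathbb{C}_{-\eta}^{+}$, so Carleson's embedding (see (\ref{eq:Carleson measure}) and the remarks after it) gives the sum $\lesssim\|h\|^{q}=1$.

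I expect the main---though ultimately benign---difficulty to be exactly this weight-matching: the degeneracy of $\vartheta_{n}$ that breaks the unweighted estimate is compensated term by term by the $\vartheta_{n}^{-q}$ weight inherent in the McPhail measure, which is why $(M_q)$ is the correct substitute for the plain Carleson-measure bound, and the $\epsilon$-gain surfaces through the harmless factor $e^{-q\epsilon\text{Im}(\lambda_{n})}\leq1$. The contribution of $f^{-}$ is handled identically, now invoking the hypothesis for $\mathbb{C}_{a}^{-}$ with the weight $e^{-\tau|\text{Im}|}\omega$; adding the two estimates furnishes the desired inequality for finitely supported data and, by density, shows that $\Lambda$ is $\omega$-interpolating in $PW_{\tau+\epsilon}^{p}$.
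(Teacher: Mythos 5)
Your proposal is correct and follows essentially the same route as the paper: the same interpolating function $\sum_n (a_n/\omega_n) f_n H_\epsilon(\cdot-\lambda_n)$, the same Shapiro--Shields duality and Cauchy-formula evaluation, with the factor $\vartheta_n^{-1}$ in $|G^0_{\lambda_n,\epsilon}(\lambda_n)|$ absorbed by the McPhail measure and the $\epsilon$-gain discarded via $e^{-q\epsilon|\mathrm{Im}(\lambda_n)|}\leq 1$. The only difference is one of completeness: the paper sketches the argument assuming $\Lambda\subset\mathbb{C}_1^{+}$ ``to avoid technical details,'' whereas you carry out the full two-half-plane splitting at $\mathrm{Im}(z)=-\eta$ as in the unweighted main theorem, which is exactly what the paper intends.
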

\begin{proof}
As in the proof of the main result of this paper, we fix $\epsilon>0$
and we take a finitely supported sequence $\left(a_{n}\right)_{n\geq1}$.
We consider the solution of the weighted interpolation problem (\ref{def: weighted interpolation})
given by\[
f(z)=\sum_{n\geq1}\frac{a_{n}}{\omega_{n}}f_{n}(z)H_{\epsilon}\left(z-\lambda_{n}\right).\]
As previously, it is possible to split the sum in two parts that we
estimate separately. In order to avoid technical details, let us assume
here that $\Lambda$ lies in the half-plane $\mathbb{C}_{1}^{+}$.
As before, we set \[
G_{\lambda_{n},\epsilon}\left(z\right)=e^{i\left(\tau+\epsilon\right)z}\left(z-\lambda_{n}\right)f_{n}(z)H_{\epsilon}\left(z-\lambda_{n}\right)\in H_{+}^{\infty}.\]
If $B$ denotes the Blaschke product associated to $\Lambda$, we
again write\[
G_{\lambda_{n},\epsilon}=BG_{\lambda_{n},\epsilon}^{0}\]
with $G_{\lambda_{n},\epsilon}^{0}$ still in $H_{+}^{\infty}$. By
duality , we need to estimate \[
\sup\left\{ N\left(h\right):\quad h\in H_{+}^{q},\;\left\Vert h\right\Vert _{q}=1\right\} ,\]
where \[
N(h):=\left|\sum_{n\geq1}\frac{a_{n}}{\omega_{n}}\int_{-\infty}^{\infty}\frac{G_{\lambda_{n},\epsilon}^{0}\left(x\right)h\left(x\right)}{x-\lambda_{n}}dx\right|.\]
The Cauchy formula gives then \begin{eqnarray*}
\left|\int_{-\infty}^{\infty}\frac{G_{\lambda_{n},\epsilon}^{0}\left(x\right)h\left(x\right)}{x-\lambda_{n}}dx\right| & = & \left|G_{\lambda_{n},\epsilon}^{0}\left(\lambda_{n}\right)h\left(\lambda_{n}\right)\right|\\
 & = & \frac{\left|2\text{Im}\left(\lambda_{n}\right)\right|}{\vartheta_{n}}e^{-\left(\tau+\epsilon\right)\left|\text{Im}\left(\lambda_{n}\right)\right|}\left|h\left(\lambda_{n}\right)\right|,\end{eqnarray*}
and, applying Hlder's inequality, we finally find\[
N(h)\lesssim\left(\sum_{n\geq1}\left|a_{n}\right|^{p}\right)^{\frac{1}{p}}\left(\sum_{n\geq1}\frac{\left|\text{Im}\left(\lambda_{n}\right)\right|^{q}}{\vartheta_{n}^{q}\omega_{n}^{q}}e^{-q\left(\tau+\epsilon\right)\left|\text{Im}\left(\lambda_{n}\right)\right|}\left|h\left(\lambda_{n}\right)\right|^{q}\right)^{\frac{1}{q}}.\]
By assumption, $\nu_{\Lambda,\tilde{\omega}}$, with $\tilde{\omega}=\left(\omega_{n}e^{\pm\tau\left|\text{Im}\left(\lambda_{n}\right)\right|}\right)_{n}$
(we recall that $\nu_{\Lambda,\tilde{\omega}}$ is defined by (\ref{eq:mc phail measure def})
) is a Carleson measure in $\mathbb{C}^{+}$ and so\[
\left(\sum_{n\geq1}\frac{\left|\text{Im}\left(\lambda_{n}\right)\right|^{q}}{\vartheta_{n}^{q}\omega_{n}^{q}e^{q\tau\left|\text{Im}\left(\lambda_{n}\right)\right|}}\left|h\left(\lambda_{n}\right)\right|^{q}\right)^{\frac{1}{q}}\lesssim\left\Vert h\right\Vert _{q}=1.\]
Since \[
e^{-q\epsilon\left|\text{Im}\left(\lambda_{n}\right)\right|}\leq1,\]
we obtain\[
\sup\left\{ N\left(h\right):\quad h\in H_{+}^{q},\;\left\Vert h\right\Vert _{q}=1\right\} \lesssim\left\Vert a\right\Vert _{l^{p}},\]
 which permits us to end the proof.\end{proof}
\begin{rem}
As we have seen in the previous section, the weak interpolation of
a sequence $\Lambda$ in $PW_{\tau}^{p}$ implies the interpolation
property on $\Lambda$ in $PW_{\tau+\epsilon}^{p}$, which follows
from the fact that a uniformly minimal sequence $\Lambda$ in the
Hardy space is an interpolating sequence in the same space. We wonder
if we could have an analog result in the weighted case. More precisely,
we say that the sequence $\Lambda\subset\mathbb{C}_{a}^{\pm}$ is
\emph{uniformly} $\omega-$\emph{minimal} in $H^{p}\left(\mathbb{C}_{a}^{\pm}\right)$
if there exists a sequence $\left(f_{n}\right)_{n\geq1}$ of functions
of $H^{p}\left(\mathbb{C}_{a}^{\pm}\right)$ such that\[
\omega_{n}f_{n}\left(\lambda_{k}\right)=\delta_{nk}\]
and\[
\sup_{n\geq1}\left\Vert f_{n}\right\Vert <\infty.\]
The question is to know whether a uniformly $\omega-$minimal sequence
$\Lambda$ in $H^{p}\left(\mathbb{C}_{a}^{\pm}\right)$ is necessarily
such that the couple $\left(\Lambda,\omega\right)$ satisfies the
McPhail condition $\left(M_{q}\right)$, $\frac{1}{p}+\frac{1}{q}=1$.
\end{rem}

\section{(Weak) Controllability of Linear Differential Systems}

We consider linear differential systems of the form \begin{equation}
\begin{cases}
x'(t) & =Ax(t)+Bu(t),\qquad t\geq0,\\
x(0) & =x_{0},\end{cases}\label{system AB}\end{equation}
where $A$ is the generator of a $c_{0}-$semigroup $\left(S(t)\right)_{t\geq0}$
on a Hilbert space $H$ and $B:\mathbb{C}\to H$ is an operator, called
the \emph{control operator} which we \emph{a priori} do not assume
bounded. We are thus interested in \emph{rank-$1$ control}. We refer
to \cite[Part D]{JP06} and references therein for more details on
this terminology and on the subject. Note that those authors also
consider unbounded control. We will assume that the semigroup $\left(S(t)\right)_{t\geq0}$
is exponentially stable, $i.e.$ there exists $\alpha>0$ such that
we can find $M\geq1$ for which\begin{equation}
\left\Vert S(t)\right\Vert \leq Me^{-\alpha t},\quad t\geq0.\label{def: expo stable}\end{equation}

Controlling the system (\ref{system AB}) means to act on the system
by means of a suitable \emph{input function $u$}. More precisely,
starting from an initial state $x_{0}\in H$, we want the system to
attain in time $\tau>0$ the in advance given final state $x_{1}=x(\tau)$.
Here the solution $x$ of (\ref{system AB}) is given by \begin{equation}
x(t)=S(t)x_{0}+\int_{0}^{t}S\left(t-r\right)Bu(r)dr=:S(t)x_{0}+\mathcal{B}_{t}u.\label{eq:solution AB}\end{equation}
The operator $\mathcal{B}_{t}$ is called \emph{controllability operator}
and we are interested in the study of its range, the so-called space
of reachable states. More precisely, we say that the system (\ref{system AB})
is\emph{ exactly controllable in finite time} $\tau>0$ (respectively
in \emph{infinite time}) if for every $x_{0},x_{1}\in H$, there is
$u\in L^{2}(0,\tau)$ (respectively $u\in L^{2}\left(0,\infty\right))$
such that $x(0)=x_{0}$ and $x(\tau)=x_{1}$ (respectively $\lim_{t\to\infty}x(t)=x_{1}$)
or, equivalently, if $\mathcal{B}_{\tau}$ (respectively $\mathcal{B_{\infty}}:u\mapsto\int_{0}^{\infty}S(t)Bu(t)dt$)
is surjective. It is well known that a bounded compact controllability
operator (and in particular a rank one operator) can never cover the
whole space $H$ (see \cite[p. 215]{Ni02b}).

In all what follows, we will assume that the generator $A$ admits
a Riesz basis of (normalized) eigenvectors $\left(\phi_{n}\right)_{n\geq1}$:\[
A\phi_{n}=-\lambda_{n}\phi_{n},\quad n\geq1,\]
and that the sequence of eigenvalues $\Lambda:=\left\{ \lambda_{n}\right\} _{n\geq1}$
satisfies the Blaschke condition in the right half-plane\[
\sum_{n\geq1}\frac{\text{Re}\left(\lambda_{n}\right)}{1+\left|\lambda_{n}\right|^{2}}<\infty.\]
Note that by the exponential stability, $\Lambda$ indeed lies in
the right half-plane.The Riesz basis property gives the representation\begin{equation}
H=\left\{ x=\sum_{n\geq1}a_{n}\phi_{n}:\;\sum_{n\geq1}\left|a_{n}\right|^{2}<\infty\right\} .\label{representation riesz basis}\end{equation}
We denote by $\left(\psi_{n}\right)_{n\geq1}$ the biorthogonal family
to $\left(\phi_{n}\right)_{n\geq1}$ (which also forms a Riesz basis
of $H$ and satisfies $\left\Vert \psi_{n}\right\Vert \asymp\left\Vert \phi_{n}\right\Vert ^{-1}\asymp1$).
We suppose that $B$ has the following representation\[
Bv=v\left(\sum_{n\geq1}\overline{b}_{n}\phi_{n}\right),\qquad v\in\mathbb{C},\]
with a sequence $\left(b_{n}\right)_{n\geq1}\subset\mathbb{C}$. Observe
that $B$ does not map $\mathbb{C}$ boundedly in $H$, but it does
map boundedly into some extrapolation space in which the sequence
$\left(\phi_{n}\right)_{n\geq1}$ has dense linear span: for example,
we may define \[
H_{B}:=\left\{ x:=\sum_{n\geq1}x_{n}\phi_{n}:\;\left\Vert x\right\Vert _{B}^{2}:=\sum_{n\geq1}\frac{\left|x_{n}\right|^{2}}{n^{2}\left(1+\left|b_{n}\right|^{2}\right)}<\infty\right\} .\]
 It appears that (\ref{eq:solution AB}) can be written

\begin{eqnarray*}
x(\tau) & = & S(\tau)x_{0}+\mathcal{B}_{\tau}u\\
 & = & S(\tau)x_{0}+\sum_{n\geq1}\left(\overline{b}_{n}\int_{0}^{\tau}u\left(t\right)e^{-\lambda_{n}\left(\tau-t\right)}dt\right)\phi_{n}\\
 & = & S(\tau)x_{0}+\sum_{n\geq1}\left(\overline{b}_{n}e^{-\frac{\tau}{2}\lambda_{n}}\int_{-\frac{\tau}{2}}^{\frac{\tau}{2}}\tilde{u}\left(t\right)e^{\lambda_{n}t}dt\right)\phi_{n},\end{eqnarray*}
with $\tilde{u}:=u\left(\cdot+\frac{\tau}{2}\right)\in L^{2}\left(-\frac{\tau}{2},\frac{\tau}{2}\right)$.
We have already introduced the Fourier transform $\mathcal{F}$ and
we have mentioned that $\mathcal{F}L^{2}\left(-\frac{\tau}{2},\frac{\tau}{2}\right)=PW_{\frac{\tau}{2}}^{2}.$
Hence, if\[
f:=\mathcal{F}\tilde{u}=\int_{-\frac{\tau}{2}}^{\frac{\tau}{2}}\tilde{u}(t)e^{-it\cdot}dt\in PW_{\frac{\tau}{2}}^{2},\]
we have\[
\mathcal{B}_{\tau}u=\sum_{n\geq1}\left(\overline{b}_{n}e^{i\frac{\tau}{2}\left(i\lambda_{n}\right)}f\left(i\lambda_{n}\right)\right)\phi_{n}.\]
With the same method, we obtain\[
\mathcal{B}_{\infty}u=\sum_{n\neq1}\left(\overline{b}_{n}g\left(i\lambda_{n}\right)\right)\phi_{n},\]
where $g:=\mathcal{F}u$ which belongs to $H_{+}^{2}$ from well known
facts about Hardy spaces. Since exact controllability translates to
surjectivity of $\mathcal{B}_{\tau}$ or $\mathcal{B}_{\infty}$,
and by (\ref{representation riesz basis}), we can reformulate exact
controllability in terms of a weighted interpolation problem. 
\begin{thm}
\label{thm:ECO INT}The following assertions are equivalent.

$(i)$ The system (\ref{system AB}) is exactly controllable in finite
time $\tau>0$ (respectively in infinite time);

$(ii)$ The sequence $i\Lambda=\left\{ i\lambda_{n}\right\} _{n\geq1}$
is $\omega-$interpolating in $PW_{\frac{\tau}{2}}^{2}$, with\[
\omega_{n}:=e^{-\frac{\tau}{2}\text{\emph{Im}}\left(i\lambda_{n}\right)}\left|b_{n}\right|,\qquad n\geq1\]
(respectively $\left(\left|b_{n}\right|\right)_{n\geq1}-$interpolating
in $H_{+}^{2}$).\end{thm}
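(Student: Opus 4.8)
The plan is to recognize that both conditions are surjectivity statements for one and the same operator, read off against two bijectively related parametrizations of the input, and then to bridge the last gap by a diagonal unimodular rescaling. First I would invoke the reduction already recorded in the text: the system is exactly controllable in time $\tau$ if and only if $\mathcal{B}_\tau$ is onto $H$ (taking $x_0=0$ and letting $x_1$ range over $H$ shows the attainable increments $x_1-S(\tau)x_0$ exhaust $H$ precisely in that case). Since $(\phi_n)_{n\geq1}$ is a Riesz basis, the coefficient map $x=\sum_n a_n\phi_n\mapsto(a_n)_n$ is a topological isomorphism of $H$ onto $\ell^2$; hence surjectivity of $\mathcal{B}_\tau$ onto $H$ is equivalent to surjectivity of the coordinate map $u\mapsto\big(\overline{b}_n\,e^{-\frac{\tau}{2}\lambda_n}f(i\lambda_n)\big)_n$ onto $\ell^2$, where $f=\mathcal{F}\tilde u\in PW_{\tau/2}^2$ is the function produced by the computation preceding the statement.

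Next I would use that $u\mapsto f$ is a \emph{bijection} of $L^2(0,\tau)$ onto $PW_{\tau/2}^2$, being the composition of the translation $u\mapsto\tilde u=u(\cdot+\tfrac{\tau}{2})$ with the Fourier--Plancherel isomorphism $\mathcal{F}\colon L^2(-\tfrac{\tau}{2},\tfrac{\tau}{2})\to PW_{\tau/2}^2$. Thus the coordinate map is onto $\ell^2$ exactly when $f\mapsto\big(\overline{b}_n\,e^{-\frac{\tau}{2}\lambda_n}f(i\lambda_n)\big)_n$ is onto $\ell^2$. The only discrepancy with the asserted $\omega$-interpolation is a set of unimodular phases: writing $\mu_n:=i\lambda_n$, so that $\mathrm{Im}(\mu_n)=\mathrm{Re}(\lambda_n)>0$ by exponential stability, I would factor $\overline{b}_n\,e^{-\frac{\tau}{2}\lambda_n}=\omega_n\varepsilon_n$ with $\omega_n=|b_n|\,e^{-\frac{\tau}{2}\mathrm{Re}(\lambda_n)}=e^{-\frac{\tau}{2}\mathrm{Im}(i\lambda_n)}|b_n|$, which is exactly the weight in the statement, and $\varepsilon_n$ unimodular. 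As the diagonal map $(c_n)\mapsto(\varepsilon_n c_n)$ is unitary on $\ell^2$, it preserves surjectivity, so the coordinate map is onto $\ell^2$ if and only if $f\mapsto(\omega_n f(\mu_n))_n$ is, i.e.\ if and only if $i\Lambda$ is $\omega$-interpolating in $PW_{\tau/2}^2$. This settles the finite-time equivalence.

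For the infinite-time case I would run the same argument verbatim, with $\mathcal{B}_\infty u=\sum_n\big(\overline{b}_n g(i\lambda_n)\big)\phi_n$ in place of $\mathcal{B}_\tau u$, using that $u\mapsto g=\mathcal{F}u$ is a bijection of $L^2(0,\infty)$ onto $H_+^2$ by the Paley--Wiener theorem for the half-plane Hardy space. Here the weight modulus carries no exponential factor, $|\overline{b}_n|=|b_n|$, so phase absorption turns surjectivity of $\mathcal{B}_\infty$ into $(|b_n|)_{n\geq1}$-interpolation of $i\Lambda$ in $H_+^2$.

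The proof is essentially bookkeeping once the representation formulas are in hand, so there is no deep obstacle; the two points that demand care are the bijectivity of $u\mapsto f$ and $u\mapsto g$ (where the Plancherel and Paley--Wiener isomorphisms enter and guarantee that surjectivity is transported faithfully between $L^2$ of the time interval and the function space) and the phase absorption, which tacitly requires $b_n\neq0$ for all $n$. I would observe that if $b_n=0$ for some $n$ then the $n$-th coordinate of $\mathcal{B}_\tau u$ (resp.\ $\mathcal{B}_\infty u$) is identically zero while $\omega_n=0$, so exact controllability and $\omega$-interpolation fail together; the equivalence therefore persists, and one may assume $b_n\neq0$ throughout without loss.
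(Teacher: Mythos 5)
Your proof is correct and follows essentially the same route as the paper: the paper's own argument is exactly the computation preceding the theorem statement, namely rewriting $\mathcal{B}_{\tau}u$ and $\mathcal{B}_{\infty}u$ through the Riesz basis coordinates and the Fourier--Plancherel isomorphism ($L^{2}(-\frac{\tau}{2},\frac{\tau}{2})\simeq PW_{\frac{\tau}{2}}^{2}$, resp. $L^{2}(0,\infty)\simeq H_{+}^{2}$), and then reading off surjectivity as weighted interpolation. Your explicit treatment of the unimodular phase absorption and of the degenerate case $b_{n}=0$ only makes precise what the paper leaves implicit.
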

\begin{rem}
As a consequence, and in view of Remark \ref{Rem: weighted int PW entraine MP},
an exact controllable system (in finite time $\tau$) has necessarily
to be such that $\left(i\Lambda,\left(\left|b_{n}\right|\right)_{n}\right)$
satisfies $\left(M_{2}\right)$ in $\mathbb{C}^{+}$ and hence the
system has to be controllable in infinite time.
\end{rem}
In \cite[p. 289-290]{Ni02b}, the author introduces a weaker type
of control, called \emph{control for simple oscillations}, requiring
that the control operator maps boundedly some Hilbert space $\mathcal{U}$
into $H$. As already mentioned above, compact (and hence finite rank)
control is impossible with such hypotheses so that we have to deal
here with unbounded control operators $B$. Nevertheless, we keep
the terminology of \cite{Ni02b} in our situation.

The system (\ref{system AB}) is said \emph{controllable for simple
oscillations} \emph{in time} $\tau>0$ if it is possible to find a
sequence $\left(u_{n}\right)_{n\geq1}$ of functions in $L^{2}\left(0,\tau\right)$
such that\begin{equation}
\mathcal{B}_{\tau}u_{n}=\phi_{n},\quad n\geq1.\label{def CSO}\end{equation}

\begin{rem}
Since \[
\left\langle \mathcal{B}_{\tau}u,\:\psi_{n}\right\rangle =\overline{b}_{n}e^{i\frac{\tau}{2}\left(i\lambda_{n}\right)}f\left(i\lambda_{n}\right),\qquad n\geq1,\]
we easily see that (\ref{def CSO}) is equivalent to the minimality
of $i\Lambda$ in $PW_{\tau}^{2}$.
\end{rem}
We can now use Theorems \ref{thm:WEIGHTED VERSION}, \ref{thm:ECO INT}
and the previous remark to establish a link between control for simple
oscillations at time $\tau$ and exact control at time $\tau+\epsilon$.
More precisely, we have the following result.
\begin{thm}
Under the above hypotheses, if the system (\ref{system AB}) is exactly
controllable in infinite time (or equivalently if $\left(i\Lambda,\left(\left|b_{n}\right|\right)_{n}\right)$
satisfies $\left(M_{2}\right)$ in $\mathbb{C}^{+}$) and if it is
controllable for simple oscillations in time $\tau>0$, then the system
is exactly controllable in finite time $\tau+\epsilon$, for every
$\epsilon>0$.\end{thm}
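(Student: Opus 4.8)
The plan is to translate the two controllability hypotheses into the harmonic analysis of the preceding sections and then feed them into Theorem \ref{thm:WEIGHTED VERSION}. Write $r_{n}:=\text{Re}(\lambda_{n})$, so that $\text{Im}(i\lambda_{n})=r_{n}\geq\alpha>0$ by exponential stability (\ref{def: expo stable}); in particular $i\Lambda$ lies entirely in the half-plane $\mathbb{C}_{\alpha}^{+}$. By Theorem \ref{thm:ECO INT}, exact controllability in time $\tau+\epsilon$ is equivalent to $i\Lambda$ being $\omega-$interpolating in $PW_{(\tau+\epsilon)/2}^{2}$ for the weight $\omega_{n}:=e^{-\frac{\tau+\epsilon}{2}r_{n}}|b_{n}|$; by the remark following (\ref{def CSO}), controllability for simple oscillations in time $\tau$ means that $i\Lambda$ is minimal in $PW_{\tau/2}^{2}$; and by the infinite-time case of Theorem \ref{thm:ECO INT} together with McPhail's Theorem \ref{thm:(McPhail)}, exact controllability in infinite time is equivalent to $(i\Lambda,(|b_{n}|))\in(M_{2})$ in $\mathbb{C}^{+}$, i.e. to the Carleson property of the measure $\nu:=\sum_{n}\frac{r_{n}^{2}}{|b_{n}|^{2}\vartheta_{n}^{2}}\delta_{i\lambda_{n}}$.

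With this dictionary, I would apply Theorem \ref{thm:WEIGHTED VERSION} to the minimal sequence $i\Lambda$ in $PW_{\tau/2}^{2}$, with weight $\omega_{n}=e^{-\frac{\tau+\epsilon}{2}r_{n}}|b_{n}|$ and type increment $\epsilon/2$, so that its conclusion lands exactly in $PW_{(\tau+\epsilon)/2}^{2}$ and asserts precisely that $i\Lambda$ is $\omega-$interpolating there; by the first item of the dictionary this is exact controllability in time $\tau+\epsilon$. Since $i\Lambda$ sits in a single half-plane, as in the simplifying assumption made in the proof of Theorem \ref{thm:WEIGHTED VERSION}, the hypothesis to verify collapses to a single McPhail condition in $\mathbb{C}^{+}$, and there is no need to run over all $a\in\mathbb{R}$.

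The crux, and the step I expect to be the only delicate point, is to check that this McPhail condition is nothing but the infinite-time hypothesis already in hand. Here one must be careful to work, as the duality computation in the proof of Theorem \ref{thm:WEIGHTED VERSION} actually does, with the \emph{output} type $(\tau+\epsilon)/2$ rather than the input type $\tau/2$: the embedding $PW^{2}\hookrightarrow H^{2}(\mathbb{C}^{+})$ contributes to the weight a factor $e^{\frac{\tau+\epsilon}{2}r_{n}}$ which cancels exactly against the $e^{-\frac{\tau+\epsilon}{2}r_{n}}$ built into $\omega_{n}$, so that the measure one must estimate against $h\in H_{+}^{2}$ collapses to $\sum_{n}\frac{r_{n}^{2}}{|b_{n}|^{2}\vartheta_{n}^{2}}\delta_{i\lambda_{n}}=\nu$. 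This measure is Carleson precisely by the infinite-time controllability assumption, so the Carleson-measure estimate $\int|h|^{2}\,d\nu\lesssim\|h\|_{2}^{2}$ furnishes the required bound. The whole content of the argument is therefore this exponential bookkeeping --- arranging that the surplus factor $e^{-\frac{\epsilon}{2}r_{n}}\leq1$ is used in our favour rather than against us --- after which Theorems \ref{thm:ECO INT} and \ref{thm:WEIGHTED VERSION} close the proof for every $\epsilon>0$.
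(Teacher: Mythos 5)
Your proof is correct, and it uses exactly the ingredients the paper intends (Theorems \ref{thm:ECO INT}, \ref{thm:(McPhail)}, \ref{thm:WEIGHTED VERSION} and the remark after (\ref{def CSO})), but it supplies something the paper does not: the paper states this theorem with no proof at all, presenting it as an immediate corollary, and you have correctly identified that the combination is \emph{not} black-box. In your notation $r_{n}=\text{Re}(\lambda_{n})=\text{Im}(i\lambda_{n})$, Theorem \ref{thm:ECO INT} at time $\tau+\epsilon$ forces the weight $\omega_{n}=e^{-\frac{\tau+\epsilon}{2}r_{n}}|b_{n}|$, and for this weight the literal hypothesis of Theorem \ref{thm:WEIGHTED VERSION} (the McPhail condition at the \emph{input} type $\tau/2$, i.e.\ on $e^{\frac{\tau}{2}r_{n}}\omega_{n}=e^{-\frac{\epsilon}{2}r_{n}}|b_{n}|$) would require the measure $\sum_{n}r_{n}^{2}e^{\epsilon r_{n}}|b_{n}|^{-2}\vartheta_{n}^{-2}\delta_{i\lambda_{n}}$ to be Carleson, which does \emph{not} follow from the infinite-time hypothesis; conversely, applying that theorem with the weight $e^{-\frac{\tau}{2}r_{n}}|b_{n}|$, whose input-type McPhail condition is exactly $(i\Lambda,(|b_{n}|)_{n})\in(M_{2})$, yields interpolation only for a larger weight than needed, and weighted interpolation passes from smaller weights to larger ones, not the reverse. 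Your repair --- rerunning the duality computation and noting that it only ever uses the McPhail condition at the \emph{output} type $(\tau+\epsilon)/2$, where the factor $e^{-\frac{\tau+\epsilon}{2}r_{n}}$ from the Cauchy evaluation of $G_{i\lambda_{n},\epsilon/2}^{0}$ cancels exactly against $\omega_{n}^{-1}$, leaving precisely $\nu=\sum_{n}r_{n}^{2}|b_{n}|^{-2}\vartheta_{n}^{-2}\delta_{i\lambda_{n}}$ --- is exactly the right fix; in effect you prove and use a sharpened form of Theorem \ref{thm:WEIGHTED VERSION} whose hypothesis is $\left(\Lambda,e^{(\tau+\epsilon)\left|\text{Im}(\lambda)\right|}\omega\right)\in(M_{q})$. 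Two further points are to your credit: you silently correct the remark after (\ref{def CSO}) (the computation of $\mathcal{B}_{\tau}$ gives minimality of $i\Lambda$ in $PW_{\tau/2}^{2}$, not $PW_{\tau}^{2}$; read literally, the paper's version would only yield controllability in time $2\tau+\epsilon$), and your reduction to a single McPhail condition in $\mathbb{C}^{+}$ is legitimate because exponential stability places $i\Lambda$ in $\mathbb{C}_{\alpha}^{+}$, which is precisely the simplified half-plane setting in which the paper proves Theorem \ref{thm:WEIGHTED VERSION}.
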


$\\$

\textsc{\small \'Equipe d'Analyse, Institut de Math\'ematiques de Bordeaux,
Universit\'e Bordeaux 1, 351 cours de la Lib\'eration 33405 Talence C\'edex,
France. }{\small \par}

\end{document}